\newtheorem{theorem}{Theorem}
\newtheorem{lemma}[theorem]{Lemma}
\newcommand{\conf}[1]{{\tt #1}}
\def\url{{\tt http://www.ucw.cz/\~{}kral/cyclic-six/}}
\begin{document}
\title{Third case of the Cyclic Coloring Conjecture~\thanks{An extended abstract containing this result has appeared in the proceedings of EuroComb'15 published in Electronic Notes in Discrete Mathematics. The work leading to this invention has received funding from the European Research Council under the European Union's Seventh Framework Programme (FP7/2007-2013)/ERC grant agreement no.~259385.}}
\author{Michael Hebdige\thanks{Department of Computer Science, University of Warwick, Coventry CV4 7AL, UK. E-mail: {\tt m.hebdige@warwick.ac.uk}.} \and
\and
        Daniel Kr\'al'\thanks{Mathematics Institute, DIMAP and Department of Computer Science, University of Warwick, Coventry CV4 7AL, UK. E-mail: {\tt d.kral@warwick.ac.uk}.}}
\date{}
\maketitle

\begin{abstract}
The Cyclic Coloring Conjecture asserts that
the vertices of every plane graph with maximum face size $\Delta^*$
can be colored using at most $\lfloor 3\Delta^*/2\rfloor$ colors in such a way that
no face is incident with two vertices of the same color.
The Cyclic Coloring Conjecture has been proven only for two values of $\Delta^*$:
the case $\Delta^*=3$ is equivalent to the Four Color Theorem and
the case $\Delta^*=4$ is equivalent to Borodin's Six Color Theorem,
which says that every graph that can be drawn in the plane with each edge crossed by at most one other edge is $6$-colorable.
We prove the case $\Delta^*=6$ of the conjecture.
\end{abstract}

\section{Introduction}

One of the most well-known open problems on coloring planar graphs is the Cyclic Coloring Conjecture,
which was made by Borodin in 1984~\cite{bib-borodin84} (the conjecture is sometimes thought
to have also been made by Ore and Plummer in the 1960's). 
The conjecture asserts that every plane graph with maximum face $\Delta^*$
has a cyclic coloring with at most $\lfloor 3\Delta^*/2\rfloor$ colors,
i.e. its vertices can be colored with at most $\lfloor 3\Delta^*/2\rfloor$ colors in such a way that
no two vertices incident with the same face get the same color.
The case $\Delta^*=3$ of the conjecture is equivalent to the Four Color Theorem,
which asserts that every planar graph is $4$-colorable and
which was proven in~\cite{bib-appel77,bib-appel77b}; a simpler proof was given in~\cite{bib-robertson99+}.
The only other known case of the conjecture is $\Delta^*=4$,
which is known as Borodin's Six Color Theorem~\cite{bib-borodin84,bib-borodin95}.
This case of the conjecture is equivalent to the following statement:
every graph embedded in the plane in such a way that each edge is crossed by at most one other edge is $6$-colorable.

There has been a substantial amount of work on the conjecture
both focused on proving upper bounds for particular values of $\Delta^*$,
which are summarized in Table~\ref{tab-bounds}, and on establishing general bounds. 
The work on general bounds~\cite{bib-borodin92,bib-ore69+,bib-borodin99+}
culminated with currently the best known general bound
$\lceil 5\Delta^*/3\rceil$ due to Sanders and Zhao~\cite{bib-sanders01+}. 
Amini, Esperet and van den Heuvel~\cite{bib-amini08+},
extending the work from~\cite{bib-havet07+,bib-havet08+}, proved that
the conjecture holds asymptotically in the following sense:
for every $\varepsilon>0$, there exists $\Delta_0$ such that
every plane graph with maximum face size $\Delta^*\ge\Delta_0$
has a cyclic coloring with at most $\left(\frac{3}{2}+\varepsilon\right)\Delta^*$ colors.

\begin{table}
\begin{center}
\begin{tabular}{|l|c|c|c|c|c|c|c|c|}
\hline
Value of $\Delta^*$ & 3 & 4 & 5 & 6 & 7 & 8 & 9 & 10 \\
\hline
Upper bound & 4 & 6 & 8 & {\bf 9} & 11 & 13 & 15 & 17 \\
Source & \cite{bib-appel77,bib-appel77b,bib-robertson99+} & \cite{bib-borodin84,bib-borodin95} & 
       \cite{bib-borodin99+} & {\bf here} &   
       \cite{bib-havet+} & \cite{bib-zlamalova} & 
       \cite{bib-borodin92} & \cite{bib-sanders01+}\\ 
\hline
Conjecture & 4 & 6 & 7 & 9 & 10 & 12 & 13 & 15 \\
\hline
\end{tabular}
\end{center}
\caption{The known upper bounds for the Cyclic Coloring Conjecture.}
\label{tab-bounds}
\end{table}

There has been no new exact results on the conjecture for more than 30 years.
In this paper, we resolve another case of the conjecture, proving the following.

\begin{theorem}
\label{thm-main}
Every plane graph with maximum face size at most six
has a cyclic coloring using at most nine colors.
\end{theorem}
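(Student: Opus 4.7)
The plan is a proof by contradiction. Suppose $G$ is a minimum counterexample with respect to $(|V(G)|,|E(G)|)$ in the lexicographic order: a plane graph with all face sizes at most six whose vertices admit no cyclic coloring with nine colors. Standard reductions first force $G$ to be $3$-connected, since a separating set of at most two vertices would split $G$ into smaller plane pieces, each cyclically $9$-colourable by minimality, and the colourings could be recombined after a colour permutation on one side. It is convenient to view the cyclic coloring problem as ordinary vertex coloring of the auxiliary graph $G^*$ on $V(G)$ whose edges join any two vertices that share a face; the face-degree of $v$ is then $\deg_{G^*}(v)=\sum_{f\ni v}(|f|-1)$, and any vertex with face-degree at most $8$ is immediately degree-reducible.

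The main engine is a discharging argument. Assign initial charges to vertices and faces so that, by Euler's formula, they sum to a fixed negative constant; for instance, give $\deg(v)-4$ to each vertex and $|f|-4$ to each face, so the total charge is $-8$. One then designs redistribution rules moving charge from positively-charged elements to negatively-charged ones in such a way that, unless $G$ contains one of a prescribed finite list of local configurations, every vertex and every face ends with non-negative charge, yielding the desired contradiction. The unavoidable configurations one expects here involve small-degree vertices incident to several small faces, clusters of mutually incident triangles or quadrilaterals, and restricted neighbourhoods of pentagonal and hexagonal faces.

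The second, harder, half is to prove that each configuration on that list is reducible: any cyclic $9$-colouring of a strictly smaller plane graph obtained by deleting, contracting, or identifying suitable vertices of the configuration (while keeping all face sizes at most six) can be extended back to $G$. The extension step is naturally formulated as a list-colouring problem, each uncoloured vertex $w$ having at least $9-|\mathrm{forbidden}(w)|$ available colours, and is then attacked by Hall-type matching arguments, or finer algebraic list-colouring techniques, applied to the conflict graph spanned by the uncoloured portion of the configuration.

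The principal obstacle will be the tight interplay between the two halves: the discharging rules must be fine-tuned so that the surviving unavoidable configurations are exactly those for which reducibility can be established, and since the bound $\lfloor 3\Delta^*/2\rfloor=9$ is tight (each hexagonal face already spans a $K_6$ in $G^*$), the reducibility arguments leave almost no slack. I expect the most delicate cases to be those involving several mutually incident hexagonal faces, whose zero initial charge forces every discharging flow through them to be controlled very precisely and whose rigidity under the cyclic constraint leaves little room for colour extensions. Given the anticipated size of the resulting case analysis, it is very likely that the final reducibility check is delegated to a computer search, with supporting data provided at the authors' referenced URL.
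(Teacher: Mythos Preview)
Your plan matches the paper's approach: a minimal counterexample, discharging with exactly the charges $\deg(v)-4$ and $|f|-4$ summing to $-8$, a large catalogue of reducible local configurations handled by passing to a smaller plane instance and extending the colouring, and computer assistance for the final-charge analysis of $5$- and $6$-faces. One point to adjust is that the paper does not establish $3$-connectivity---your recombination argument across a $2$-cut is delicate for cyclic colouring because the face structure changes when you split---but rather works with $2$-connectivity together with the absence of separating cycles of length at most six; also, the reducibility arguments rely predominantly on greedy extension (with only a handful of list-colouring cases) rather than Hall-type matching.
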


\noindent 
The proof of Theorem~\ref{thm-main} is based on a discharging argument involving 103 discharging rules and 193 reducible configurations.
Despite the high complexity of the argument,
we are able to present a proof of the reducibility of all configurations and
the analysis of the final amount of charge for all vertices and for all faces except those of sizes five and six,
where we had to resort to computer assisted techniques
to analyze the final amount of charge (Lemma~\ref{lm-fac56}).
We have prepared three different programs to verify the correctness of the proof of this lemma and
we have made one of the programs available at \url .
We have also uploaded its source code to arXiv as an ancillary file.

Before presenting the proof of our main result, we would like to mention two closely related conjectures;
additional related results can also be found in a recent survey by Borodin~\cite{bib-borodin13}.
One is the conjecture of Plummer and Toft~\cite{bib-plummer87+}, studied e.g.~in~\cite{bib-enomoto01+,bib-hornak99+,bib-hornak00+,bib-hornak10+},
asserting that every $3$-connected plane graph with maximum face size $\Delta^*$ has a cyclic coloring using at most $\Delta^*+2$ colors.
The other conjecture is the Facial Coloring Conjecture from~\cite{bib-kral05+},
which was studied e.g.~in~\cite{bib-havet10+,bib-havet+,bib-kral05+,bib-kral07+}.
This conjecture asserts for every positive integer $\ell$ that
every plane graph has an $\ell$-facial coloring with at most $3\ell+1$ colors,
i.e.~a vertex coloring such that any vertices joined by a facial walk of length at most $\ell$ receive different colors.
If the Facial Coloring Conjecture holds for a particular value of $\ell$,
then the Cyclic Coloring Conjecture holds for $\Delta^*=2\ell+1$.
Unfortunately, the only proven case of the Facial Coloring Conjecture is the case $\ell=1$, which is equivalent to the Four Color Theorem.
Still, partial results towards the proof of the Facial Coloring Conjecture
give the best known upper bound for the case $\Delta^*=7$ of the Cyclic Coloring Conjecture~\cite{bib-havet+}.

\section{Notation}

We follow the notation standard in the area of planar graph coloring.
All graphs considered in the following are plane graphs that could have parallel edges but do not have loops.
A vertex of degree $k$ is referred to as a $k$-vertex,
a vertex of degree at most $k$ as a $\le k$-vertex and
a vertex of degree at least $k$ as a $\ge k$-vertex.
The {\em degree of a face} is the number of vertices incident with it and
we use a $k$-face, a $\le k$-face and a $\ge k$-face in the analogous meanings.
Two vertices are {\em facially adjacent} if they are incident with the same face and
the {\em facial degree} of a vertex is the number of vertices facially adjacent to it.
In a $2$-connected plane graph,
each face is bounded by a cycle and proper connected subgraphs of this cycle are referred to as {\em facial walks}.
Finally, a cycle $C$ in a plane graph $G$ is {\em separating} if it does not bound a face neither inside nor outside.

When describing configurations in plane graphs, we will often describe $5$-faces and $6$-faces in the following way:
a $k$-face $v_1v_2\cdots v_k$, $k\in\{5,6\}$, will be represented by a string of length $2k+2$ characters starting with \conf{P:} or \conf{H:}
if $k=5$ or $k=6$, respectively.
The $(2i+1)$-th position will represent the type of the vertex $v_i$ and
the $(2i+2)$-th position will represent the type of the face sharing the edge $v_iv_{i+1}$ (indices modulo $k$).
The types of vertices and faces are encoded using the notation given in Tables~\ref{tab-ver} and~\ref{tab-fac}, respectively.
In both cases, we can use wildcards to represent several types of vertices and faces as given in Tables~\ref{tab-ver+} and~\ref{tab-fac+}.
Since a minimal counterexample to Theorem~\ref{thm-main} cannot contain a $3$-face and a $\le 5$-face sharing an edge,
we will consider configurations where every $3$-face shares edges with $6$-faces only.

\begin{table}
\begin{center}
\begin{tabular}{|c|p{12cm}|}
\hline
Type & Description \\
\hline
\conf{t} & a $3$-vertex such that its neighbor not on the face is a $\ge 4$-vertex\\
\conf{o} & a $3$-vertex such that its neighbor not on the face is also a $3$-vertex\\
\conf{v} & a $4$-vertex $v$ contained in a $3$-face $vv'v''$ such that
           neither $v'$ nor $v''$ is on the described face and
	   both $v'$ and $v''$ are $\ge 4$-vertices\\
\conf{u} & a $4$-vertex $v$ contained in a $3$-face $vv'v''$ such that
           neither $v'$ nor $v''$ is on the described face and
	   $v'$ and $v''$ are a $3$-vertex and $\ge 4$-vertex\\
\conf{w} & a $4$-vertex $v$ contained in a $3$-face $vv'v''$ such that
           neither $v'$ nor $v''$ is on the described face and
	   both $v'$ and $v''$ are $3$-vertices\\
\conf{4} & a $4$-vertex \\
\conf{5} & a $5$-vertex \\
\conf{6} & a $\ge 6$-vertex \\
\hline
\end{tabular}
\end{center}
\caption{The vertex type representation.}
\label{tab-ver}
\end{table}

\begin{table}
\begin{center}
\begin{tabular}{|c|p{12cm}|}
\hline
Type & Description \\
\hline
\conf{t} & a $3$-face $v_iv_{i+1}w$ such that $w$ is a $3$-vertex and its remaining neighbor is a $\ge 4$-vertex \\
\conf{O} & a $3$-face $v_iv_{i+1}w$ such that $w$ is a $3$-vertex and its remaining neighbor is a $3$-vertex \\
\conf{x} & a $3$-face $v_iv_{i+1}w$ such that $w$ is a $\ge 4$-vertex \\
\conf{Q} & a $4$-face\\
\conf{P} & a $5$-face\\
\conf{H} & a $6$-face\\
\hline
\end{tabular}
\end{center}
\caption{The face type representation assuming the faces share an edge $v_iv_{i+1}$.}
\label{tab-fac}
\end{table}

\begin{table}
\begin{center}
\begin{tabular}{|c|c|p{6cm}|}
\hline
Wildcard & Represented types & Description \\
\hline
\conf{3} & \conf{t} and \conf{o} & a $3$-vertex\\
\conf{x} & all but \conf{t} and \conf{o} & a $\ge 4$-vertex\\
\conf{+} & \conf{5} and \conf{6} & a $\ge 5$-vertex \\
\conf{*} & all & any type of a vertex\\
\hline
\end{tabular}
\end{center}
\caption{The vertex type wildcards.}
\label{tab-ver+}
\end{table}

\begin{table}
\begin{center}
\begin{tabular}{|c|c|l|}
\hline
Wildcard & Represented types & Description \\
\hline
\conf{3} & \conf{t} and \conf{O} & a $3$-face with the tip being a $3$-vertex \\
\conf{T} & \conf{t}, \conf{O} and \conf{x} & a $3$-face \\
\conf{F} & \conf{Q}, \conf{P} and \conf{H} & a $\ge 4$-face \\
\conf{*} & all & any type of a face\\
\hline
\end{tabular}
\end{center}
\caption{The face type wildcards.}
\label{tab-fac+}
\end{table}

The (most generic) $6$-face configuration described as \conf{H:3Q5*oO4Po*3*} and
the $5$-face configuration described as \conf{P:v*w*******} can be found in Figure~\ref{fig-example}.
When drawing faces, we will represent $3$-vertices with circles, $4$-vertices with squares and
$5$-vertices with pentagons (as shown in Figure~\ref{fig-example}).
Finally, if the description of a face ends with one or more stars, we often omit these stars.
In particular, the configurations depicted in Figure~\ref{fig-example}
can also be described as \conf{H:3Q5*oO4Po*3} and \conf{P:v*w}.

\begin{figure}
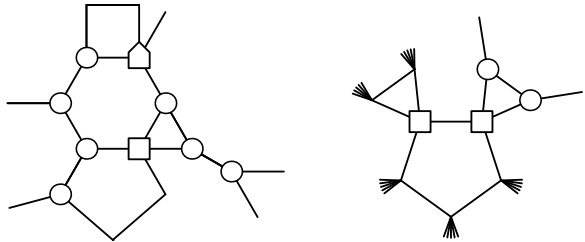

\begin{center}
\epsfbox{cyclic-six.1001}
\hskip 4ex
\epsfbox{cyclic-six.1002}
\end{center}
\caption{The most generic face configurations described by \conf{H:3Q5*oO4Po*3*} and by \conf{P:v*w*******}.}
\label{fig-example}
\end{figure}

\section{Overview of the proof}

We consider a minimal graph with maximum face size at most $6$ that has no cyclic coloring with at most $9$ colors;
the minimality is measured as the minimality of the sum of the numbers of vertices and edges.
Such a minimal graph is further referred to as a {\em minimal counterexample}.
It is easy to show that a minimal counterexample is $2$-connected, it has no parallel edges and its minimum facial degree is at least $9$.
In particular, the minimum degree of a minimal counterexample is at least $3$.
In addition, a minimal counterexample cannot contain a separating cycle of length at most $6$.
Also observe that a minimal counterexample does not contain a $3$-face that shares an edge with a $\le 5$-face.

We exclude the existence of a minimal counterexample (and thus prove Theorem~\ref{thm-main}) using the discharging method.
We fix a minimal counterexample and assign each $k$-vertex $k-4$ units of charge and each $k$-face $k-4$ units of charge.
Euler's formula implies that the sum of the amounts of the initial charges is $-8$.
We then apply the set of discharging rules described in Section~\ref{sec-rules}.
Based on these rules, some of the vertices and faces send charge to incident elements in such a way that the total sum of the charges is preserved.
However, we show that a minimal counterexample cannot contain any of the configurations described in Section~\ref{sec-reduc},
so-called reducible configurations, and using this we show that the final amount of charge of any vertex and any face is non-negative.
Since the amount of charge was preserved, this is impossible and hence excludes the existence of a counterexample
to the Cyclic Coloring Conjecture for $\Delta^*=6$, which finishes the proof.

\section{Reducible configurations}
\label{sec-reduc}

In this section, we identify configurations that cannot appear in a minimal counterexample.
To avoid an excessive use of wildcards, when we say that a certain configuration with the description containing \conf{v} is reducible,
we actually mean that the configurations with \conf{v} replaced with \conf{u} and \conf{w} are also reducible.
Likewise, the configurations with description containing \conf{u} are reducible with \conf{u} replaced with \conf{w}.
For example,
when we have established that the configuration \conf{P:v*3P3} is reducible (the configuration is depicted in Figure~\ref{fig-simple5}),
we have established that the configurations \conf{P:u*3P3} and \conf{P:w*3P3} are also reducible.

\subsection{Simple greedy reductions}
\label{sub-greedy}

The reducibility of most of the configurations will be established in the following way:
we consider a minimal counterexample $G$ containing the configuration,
possibly add some edges and
then contract one or more connected subgraphs to obtain a graph $G'$ with maximum face size at most six.
These subgraphs will be identified by the capital letters $A$, $B$, etc.~and
the resulting vertices of $G'$ will be denoted by $w_A$, $w_B$, etc.
If one or more loops appear because of the contraction, they get removed.

By the minimality of $G$, there exists a cyclic coloring of $G'$ using at most nine colors.
Most of the vertices of $G$ will keep the colors they are assigned in $G'$.
Two or more vertices of each subgraph $X=A,B,\ldots$ will get the color assigned to $w_X$ in $G'$
while the others remain uncolored.
The obtained coloring is then completed by coloring the non-colored vertices in a specific order.
This order is chosen in such a way that each vertex is facially adjacent to vertices with at most eight different colors
when it is supposed to be colored. Hence, the coloring can be completed to obtain a cyclic coloring of $G$.

Clearly, the vertices of the component $X$ that get the color of $w_X$ cannot be facially adjacent.
The next two lemmas will guarantee that certain pairs of the vertices of a subgraph $X$
are not facially adjacent.

\begin{lemma}
\label{lm-dist3}
If two vertices $u$ and $u'$ of a minimal counterexample $G$ are joined by a path of length $k\in\{2,3\}$ that is not a facial walk,
then $u$ and $u'$ are not facially adjacent.
\end{lemma}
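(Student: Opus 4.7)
My plan is to argue by contradiction, supposing $u$ and $u'$ share a face $F$ (so $|F|\le 6$) while being joined by a non-facial path $P$ of length $k\in\{2,3\}$. The strategy is to produce a separating cycle of length at most six in $G$, which contradicts the standing assumption on minimal counterexamples.

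Since $G$ is $2$-connected, $\partial F$ is a cycle. Let $Q$ be the shorter of the two arcs of $\partial F$ from $u$ to $u'$; then $|Q|\le\lfloor|F|/2\rfloor\le 3$, so $|P|+|Q|\le 6$. Note that $P\ne Q$, because otherwise $P$ would be a proper connected subgraph of $\partial F$ and hence a facial walk.

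In the principal case, $P$ and $Q$ share only the endpoints $u$ and $u'$. Then $C:=P\cup Q$ is a simple cycle of length at most six. If $C$ were a face boundary $\partial F'$, then $P$ would be a proper subpath of $\partial F'$, making $P$ a facial walk and contradicting the hypothesis. Therefore $C$ is not a face boundary, so $C$ is a separating cycle of length at most six, which is forbidden in a minimal counterexample.

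The remaining case is when $P$ and $Q$ share some internal vertex $v$. Splitting both paths at the first such $v$ encountered along $P$ produces a strictly shorter simple cycle $C^*$ of length at most six that uses edges from both $P$ and $Q$. Repeating the argument of the principal case, one again derives a separating cycle unless $C^*$ coincides with a small face. In the residual configurations, $C^*$ and $\partial F$ would then share two edges meeting at a common vertex of $\partial F$; this is ruled out using $\delta(G)\ge 3$, which forbids two distinct faces from sharing two edges incident with a common vertex (any such sharing would force that vertex to have degree $2$), together with the structural restriction that in a minimal counterexample a $3$-face shares edges only with $6$-faces. A short enumeration over the combinations of $(|P|,|Q|)\in\{2,3\}\times\{1,2,3\}$ and the position of the shared internal vertex then completes all remaining sub-cases.

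The main obstacle is this last sub-case: the short cycle extracted from a shared-vertex collision may a priori coincide with a face that is legitimately allowed by the general properties of a minimal counterexample (for instance a $3$-face sharing a single edge with $F$), so the separating-cycle argument has to be combined with the finer structural constraints ($\delta\ge 3$ and the $3$-face/$\le 5$-face adjacency rule) to force the desired contradiction in every remaining configuration.
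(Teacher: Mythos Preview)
Your overall strategy---take the short arc $Q$ of $\partial F$, form the closed walk $P\cup Q$, and extract a short separating cycle---is exactly the paper's approach. The paper compresses the whole thing into one sentence (``the closed walk $v_0\cdots v_kw_1\cdots w_{\ell-1}$ contains a separating cycle of length at most $k+\ell\le 6$''), and your principal case supplies precisely the missing justification: if $P$ and $Q$ are internally disjoint then $C=P\cup Q$ is a simple cycle of length at most $6$, and if $C$ bounded a face then $P$ would lie on that face boundary and hence be facial.

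The gap is in your handling of the overlapping case. Your central claim there---that whenever the extracted cycle $C^{*}$ bounds a face it must share \emph{two} edges of $\partial F$ meeting at a common vertex, forcing a degree-$2$ vertex---is not true in general. Take $k=3$, $\ell=2$ with $P=uv_1v_2u'$ and $Q=uv_1u'$, so the shared internal vertex is $v_1$ and in fact the edge $uv_1$ lies in both paths. Then the \emph{only} cycle contained in $P\cup Q$ is the triangle $v_1v_2u'$, and it meets $\partial F$ in the single edge $v_1u'$. Nothing you have invoked rules out this triangle being a $3$-face sharing that one edge with the $6$-face $F$: the degree-$2$ argument does not apply, and the $3$-face/${\le}5$-face adjacency rule is satisfied because $|F|=6$. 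In that configuration $P$ is genuinely non-facial (at $v_2$ the edges $v_1v_2$ and $v_2u'$ are consecutive only on the triangle side, so no single face carries all three edges of $P$), while $u$ and $u'$ are facially adjacent via $F$. Hence your promised ``short enumeration over $(|P|,|Q|)$'' cannot be completed with only the tools you list; this sub-case (and its mirror image with $v_2$ in place of $v_1$) needs a different idea. The paper's one-line proof is equally silent on this overlap, so you are not less rigorous than the source---but the specific mechanism you propose for closing the case does not work.
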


\begin{proof}
Let $v_0\cdots v_k$ be the path between $u=v_0$ and $u'=v_k$.
Suppose that $u$ and $u'$ are facially adjacent.
Since the maximum face size of $G$ is at most six,
there is a facial walk $w_0\cdots w_{\ell}$ such that $u'=w_0$, $u=w_{\ell}$ and $\ell\in\{0,1,2,3\}$.
Since $v_0\cdots v_k$ is not a facial walk,
the closed walk $v_0\cdots v_kw_1\cdots w_{\ell-1}$ (note that $v_k=w_0$) contains a separating cycle of length at most $k+\ell\le 6$.
However, $G$ contains no separating cycle of length at most six.
\end{proof}

\begin{lemma}
\label{lm-dist4}
If two vertices $u$ and $u'$ of a minimal counterexample $G$ are joined by a path $v_0v_1v_2v_3v_4$, $u=v_0$ and $u'=v_4$,
such that $v_1v_2v_3$ is a facial walk and $v_2v_3v_4$ is a facial walk of another face,
then $u$ and $u'$ are not facially adjacent.
\end{lemma}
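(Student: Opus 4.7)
The plan is to argue by contradiction: suppose the common face $F_3$ exists, and fix a facial walk $w_0 w_1 \cdots w_\ell$ on $F_3$ with $w_0 = v_4$, $w_\ell = v_0$, and $\ell$ as small as possible, so $\ell \le 3$ since $|F_3| \le 6$. A first key observation is that the path $v_0 v_1 v_2 v_3 v_4$ is not itself a facial walk: any face containing it would carry both $v_1 v_2 v_3$ and $v_2 v_3 v_4$ as facial sub-walks, and since each facial walk on three consecutive vertices determines its face uniquely, this face would have to equal both $F_1$ and $F_2$, contradicting $F_1 \ne F_2$. Concatenating the path with the facial walk on $F_3$ produces the closed walk
\[
W = v_0 v_1 v_2 v_3 v_4 w_1 \cdots w_{\ell-1} v_0
\]
of length $4+\ell$.

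When $\ell \le 2$, $W$ has length at most $6$, and the argument in the proof of Lemma~\ref{lm-dist3} adapts essentially verbatim: since $v_0 v_1 v_2 v_3 v_4$ is not a facial walk, $W$ contains a separating cycle of length at most $6$, contradicting the minimal counterexample property. The delicate case is $\ell = 3$: then $F_3$ must be a $6$-face with $v_0$ and $v_4$ diametrically opposite, and $W$ is a 7-cycle, which is not per se forbidden. For this case I would consider both facial walks of length $3$ from $v_4$ to $v_0$ on $F_3$, and in each of the two resulting closed walks enumerate the possible coincidences of an interior vertex $w_i$ with some path vertex $v_j$. Every such coincidence splits the 7-cycle into two shorter closed walks, at least one of which has length at most $6$; combining Lemma~\ref{lm-dist3} (for example, to rule out that $v_1$ and $v_4$ are adjacent via the non-facial path $v_1 v_2 v_3 v_4$), the rule that a $3$-face shares edges only with $6$-faces, and the fact that two distinct faces of a $2$-connected simple plane graph share at most one edge, I would verify that each of these shortened cycles is forced to be separating of length at most $6$, yielding the required contradiction. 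In the sub-cases where $F_1$ or $F_2$ is a triangle, the chord $v_1 v_3$ or $v_2 v_4$ provides a shorter candidate path from $v_0$ to $v_4$: either Lemma~\ref{lm-dist3} applies directly to it, or its containing face is pinned down to be $F_3$ itself, and then Lemma~\ref{lm-dist3} applied to the auxiliary pair $(v_0, v_3)$ (which is forced to lie on $F_3$) gives the contradiction.

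The main obstacle is the residual configuration in the $\ell = 3$ case where both $7$-cycles are simple and both $|F_1|, |F_2| \ge 4$, so that no coincidence and no triangle shortcut is available. In that residual case the plan is to re-route a segment of $W$ along the boundary of $F_1$ or $F_2$ and exploit the hexagonal structure of $F_3$ to produce a closed walk of length at most $6$, from which a separating cycle of forbidden length can be extracted. Verifying that such a re-routing is always possible and carefully bookkeeping the planar embedding constitute the bulk of the combinatorial work in the proof.
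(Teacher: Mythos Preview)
Your handling of the case $\ell\le 2$ is correct and matches the paper. The genuine gap is in the residual $\ell=3$ case, where your plan is to force a separating cycle of length at most $6$ by re-routing along $\partial F_1$, $\partial F_2$, or the hexagon $F_3$. This cannot succeed: on $F_3$ the two facial walks between $v_0$ and $v_4$ both have length $3$, and on $F_i$ ($i=1,2$) the alternative walk replacing the length-$2$ segment $v_1v_2v_3$ or $v_2v_3v_4$ has length $|F_i|-2\ge 2$ (you have already excluded $|F_i|=3$). No combination of these re-routings brings the $7$-cycle down to length $\le 6$, and there is no structural reason why a minimal counterexample could not contain a genuinely separating $7$-cycle of this form with nontrivial content on both sides. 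So ``verifying that such a re-routing is always possible'' is not just bookkeeping---it is false.

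The paper's proof of this case is not structural at all; it is a colouring-and-gluing argument. One observes that the $7$-cycle $v_0v_1\cdots v_6$ is separating with $F_1$ on one side and $F_2$, $v'_5$, $v'_6$ on the other. One forms a graph $H$ from the $F_1$-side of the $7$-cycle $v_1\cdots v_7$ by replacing the path $v_1v_2v_3$ with a single edge, and a graph $H'$ from the $F_2$-side of the $7$-cycle $v_1v_2v_3v_4v'_5v'_6v_7$ by replacing $v_2v_3v_4$ with a single edge; both still have maximum face size six. By minimality each has a cyclic $9$-colouring, and after normalising the colours on the shared vertices $v_1,v_4,v_7$ one permutes the remaining colours in $H$ and $H'$ (according to whether the colours of $v_5,v_6$ in $H$ and of $v'_5,v'_6$ in $H'$ appear among the $u_j$, $u'_j$) so that the two colourings agree on the overlap and together give a cyclic colouring of $G$. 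This colouring step is the missing idea in your outline; no short-separating-cycle argument can replace it.
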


\begin{figure}
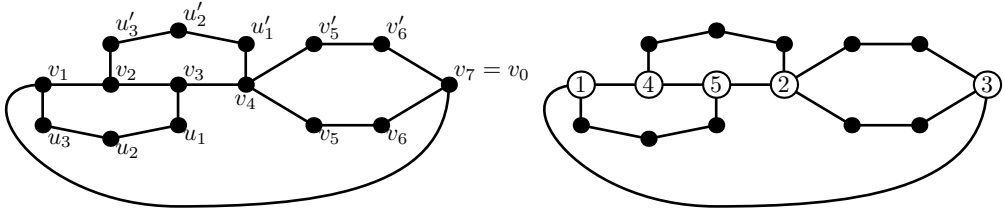

\begin{center}
\epsfbox{cyclic-six.1003}
\epsfbox{cyclic-six.1004}
\end{center}
\caption{Notation used in Lemma~\ref{lm-dist4}.}
\label{fig-dist4}
\end{figure}

\begin{proof}
If $u$ and $u'$ are facially adjacent, there is a facial walk $v_4\cdots v_k$ for $k\in\{4,5,6,7\}$ such that $v_k=u$ (and so $v_k=v_0$).
If $k\le 6$, then the closed walk $v_0v_1\cdots v_{k-1}$ would contain a separating cycle of length at most $k\le 6$, which is impossible.
Hence, we will assume that $k=7$ in the rest of the proof, i.e.~$u$ and $u'$ are the opposite vertices of a $6$-face.
Let $v_4v'_5v'_6v_7$ be the other facial walk between $u'$ and $u$ on this $6$-face,
let $v_1v_2v_3u_1\cdots u_{\ell}$ be the face containing the facial walk $v_1v_2v_3$ and
let $v_2v_3v_4u'_1\cdots u'_{\ell'}$ be the face containing the facial walk $v_2v_3v_4$.
By symmetry, we can assume that one side of the separating cycle $v_0v_1\cdots v_6$ contains
the face $v_1v_2v_3u_1\cdots u_{\ell}$ on one side and the face $v_2v_3v_4u'_1\cdots u'_{\ell'}$ and the vertices $v'_5$ and $v'_6$ on the other side.
See Figure~\ref{fig-dist4} for the illustration.

Let $H$ be the subgraph of $G$ on the side of the cycle $v_1v_2\cdots v_7$ with the face $v_1v_2v_3u_1\cdots u_{\ell}$
such that the path $v_1v_2v_3$ is replaced with the edge $v_1v_3$, and
let $H'$ be the subgraph of $G$ on the side of the cycle $v_1v_2v_3v_4v'_5v'_6v_7$ with the face $v_2v_3v_4u'_1\cdots u'_{\ell'}$
such that the path $v_2v_3v_4$ is replaced with the edge $v_2v_4$.
Note that the maximum face size of both $H$ and $H'$ is six.
The minimality of $G$ implies that both $H$ and $H'$ has facial colorings with at most nine colors.
The colors used by the two colorings are denoted by $1,2,\ldots,9$.

By symmetry, we can assume that the color of $v_1$ is $1$, that of $v_4$ is $2$ and that of $v_7$ is $3$ in both the colorings.
Moreover, we can assume that the color of $v_2$ in $H'$ is $4$ and that of $v_3$ in $H$ is $5$.
If the color of one of the vertices $v_5$ and $v_6$, say $v_i$, is different from the colors of $u_1,\ldots,u_{\ell}$,
we permute the colors of the vertices of $H$ in such a way that the color of $v_i$ is $4$,
the color of $v_{11-i}$ is $6$ and the colors of $u_1,\ldots,u_{\ell}$ are among $6,\ldots,9$.
If the color of both the vertices $v_5$ and $v_6$ appear among the colors of $u_1,\ldots,u_{\ell}$,
we permute the colors of the vertices of $H'$ in such a way that the colors of $v_5$ and $v_6$ are $6$ and $7$ and
the colors of $u_1,\ldots,u_{\ell}$ are among $6,7,8$.
We now permute the colors of the vertices of $H'$.
If the color of one of the vertices $v'_5$ and $v'_6$, say $v'_j$, is different from the colors of $u'_1,\ldots,u'_{\ell'}$,
we permute the colors of the vertices of $H$ in such a way that the color of $v'_j$ is $5$ and
the color of $v'_{11-j}$ is $8$ and the colors of $u'_1,\ldots,u'_{\ell'}$ are among $6,\ldots,9$.
If the color of both the vertices $v'_5$ and $v'_6$ appear among the colors of $u'_1,\ldots,u'_{\ell'}$,
we permute the colors of the vertices of $H'$ in such a way that the colors of $v'_5$ and $v'_6$ are $8$ and $9$ and
the colors of $u'_1,\ldots,u'_{\ell'}$ are among $6,8,9$.
It is easy to verify that the colorings of $H$ and $H'$ form a cyclic coloring of $G$.
\end{proof}

\begin{figure}
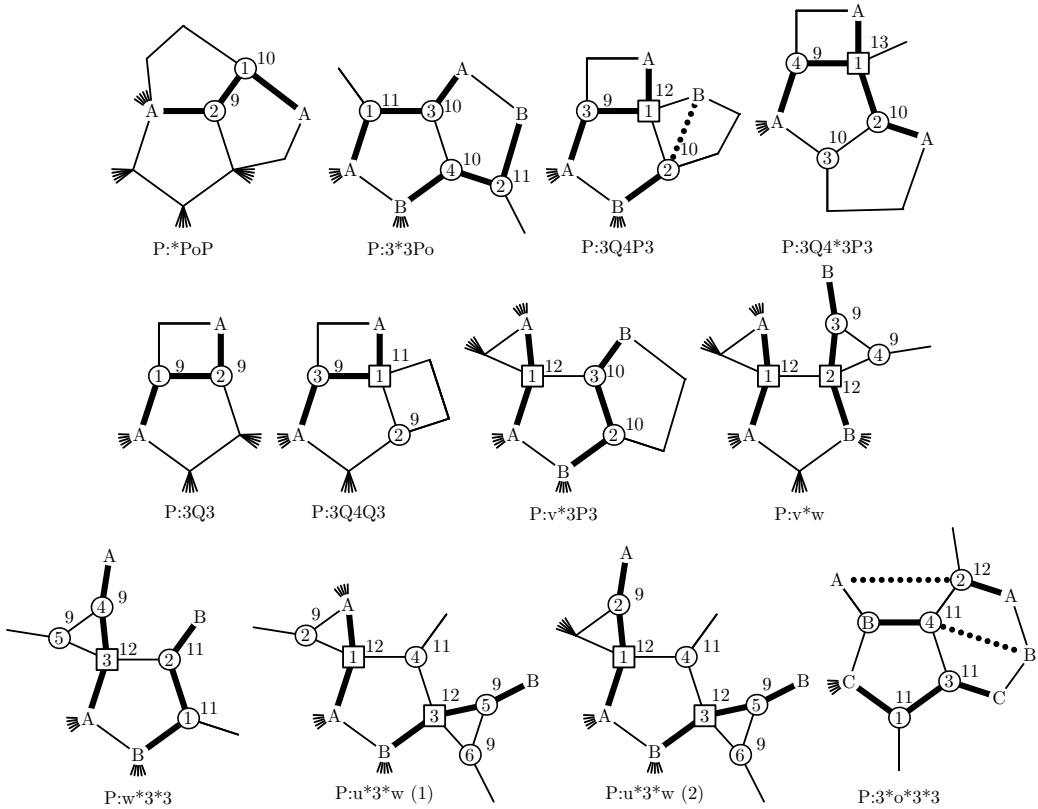

\begin{center}
\epsfbox{cyclic-six.1}
\epsfbox{cyclic-six.2}
\epsfbox{cyclic-six.3}
\epsfbox{cyclic-six.4}\\
\epsfbox{cyclic-six.5}
\epsfbox{cyclic-six.6}
\epsfbox{cyclic-six.7}
\epsfbox{cyclic-six.8}\\
\epsfbox{cyclic-six.9}
\epsfbox{cyclic-six.10}
\epsfbox{cyclic-six.11}
\epsfbox{cyclic-six.12}
\end{center}
\caption{The 11 reducible configurations related to 5-faces;
         note that there are two ways that the configuration with the description including \conf{u} can look like.
         All the depicted configurations are reducible in the simple greedy way.
         Also note that in the last configuration
         the vertices $w_A$, $w_B$ and $w_C$ in the reduced graph are incident with the same face and
	 thus they get distinct colors.}
\label{fig-simple5}
\end{figure}

Our proof uses 186 reducible configurations with their reducibility established in the way that we have just described.
The 11 such configurations related to 5-faces can be found in Figure~\ref{fig-simple5} and
the 175 configurations related to 6-faces in Figures~\ref{fig-simple6a}--\ref{fig-simple6i}.
In each of the configurations, the edges of the minimal counterexample that get contracted are depicted by bold and
the edges that are added and get contracted (if they exist) are bold and dotted.
The vertices that get the color of the vertex corresponding to the contracted component are marked by capital letters.
The numbered vertices are those that do not keep the colors and
their numbers give the order in that they are colored.
It is straightforward to verify that all the involved vertices are at distance at most six (and therefore they are distinct),
the vertices with the same capital letter satisfy the conditions of one of Lemmas~\ref{lm-dist3} and~\ref{lm-dist4}, and
each numbered vertex is facially adjacent to vertices with at most eight different colors when it gets a color.
To assist with the verification of the letter, the facial degrees of the vertices to be colored are displayed very near to them.

One more comment on the configurations depicted in Figures~\ref{fig-simple5}--\ref{fig-simple6i} is in place.
We always assume that the unconstrained faces around the considered face have size six.
Note that if their size is five or less and
this results in the absence of a vertex in one or more of the pairs $A$, $B$, etc., the counting argument for the greedy coloring would still work.
Instead of saving one color because of the facially adjacent pair of vertices with the same color,
we would save one color because the face size of the incident face is smaller.
Let us give an example.
If the face that is supposed to contain the vertices labelled with $A$, $B$, $C$, $3$, $4$ and $2$ (like in the last configuration in Figure~\ref{fig-simple5})
is a $5$-face,
we might not insert the bold dotted edge, which would result in the face containing only the vertices labelled with $A$ and $C$
in addition to those labelled with $2$, $3$ and $4$.
However, the facial degrees of the vertices labelled with $2$, $3$ and $4$ are $11$, $10$ and $10$, respectively, and
hence the greedy coloring argument would still work.
So, the assumption that all the unconstrained faces around the considered face have size six does not affect the completeness of our arguments.

\begin{figure}
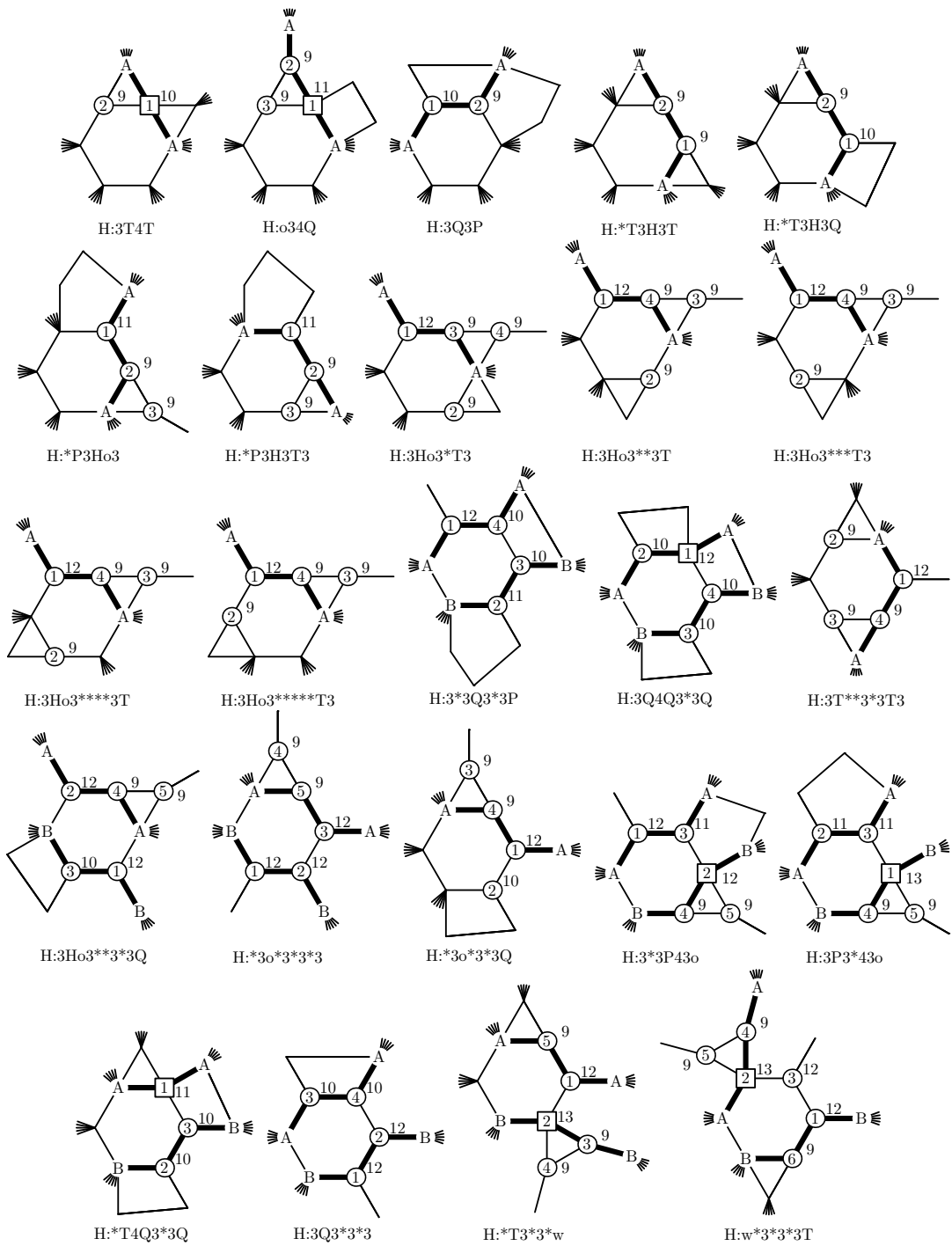

\begin{center}
\epsfbox{cyclic-six.301} \epsfbox{cyclic-six.302} \epsfbox{cyclic-six.303} \epsfbox{cyclic-six.304} \epsfbox{cyclic-six.305}
\epsfbox{cyclic-six.306} \epsfbox{cyclic-six.307} \epsfbox{cyclic-six.308} \epsfbox{cyclic-six.309} \epsfbox{cyclic-six.310}
\epsfbox{cyclic-six.311} \epsfbox{cyclic-six.312} \epsfbox{cyclic-six.313} \epsfbox{cyclic-six.314} \epsfbox{cyclic-six.315}
\epsfbox{cyclic-six.316} \epsfbox{cyclic-six.317} \epsfbox{cyclic-six.318} \epsfbox{cyclic-six.319} \epsfbox{cyclic-six.320}
\epsfbox{cyclic-six.321} \epsfbox{cyclic-six.322} \epsfbox{cyclic-six.323} \epsfbox{cyclic-six.324}
\end{center}
\caption{Configurations related to $6$-faces that are reducible in the simple greedy way---part 1.}
\label{fig-simple6a}
\end{figure}

\begin{figure}
\begin{center}
\epsfbox{cyclic-six.325} \epsfbox{cyclic-six.326} \epsfbox{cyclic-six.327} \epsfbox{cyclic-six.328} \epsfbox{cyclic-six.329}
\epsfbox{cyclic-six.330} \epsfbox{cyclic-six.331} \epsfbox{cyclic-six.332} \epsfbox{cyclic-six.333} \epsfbox{cyclic-six.334} 
\epsfbox{cyclic-six.335} \epsfbox{cyclic-six.336} \epsfbox{cyclic-six.337} \epsfbox{cyclic-six.338} \epsfbox{cyclic-six.339} 
\epsfbox{cyclic-six.340} \epsfbox{cyclic-six.341} \epsfbox{cyclic-six.342} \epsfbox{cyclic-six.343} \epsfbox{cyclic-six.344}
\epsfbox{cyclic-six.345} \epsfbox{cyclic-six.346}
\end{center}
\caption{Configurations related to $6$-faces that are reducible in the simple greedy way---part 2.}
\label{fig-simple6b}
\end{figure}

\begin{figure}
\begin{center}
\epsfbox{cyclic-six.347} \epsfbox{cyclic-six.348} \epsfbox{cyclic-six.349} \epsfbox{cyclic-six.350} \epsfbox{cyclic-six.351}
\epsfbox{cyclic-six.352} \epsfbox{cyclic-six.353} \epsfbox{cyclic-six.354} \epsfbox{cyclic-six.355} \epsfbox{cyclic-six.356}
\epsfbox{cyclic-six.357} \epsfbox{cyclic-six.358} \epsfbox{cyclic-six.359} \epsfbox{cyclic-six.360} \epsfbox{cyclic-six.361}
\epsfbox{cyclic-six.362} \epsfbox{cyclic-six.363} \epsfbox{cyclic-six.364} \epsfbox{cyclic-six.365} \epsfbox{cyclic-six.366}
\epsfbox{cyclic-six.367} \epsfbox{cyclic-six.368} \epsfbox{cyclic-six.369} \epsfbox{cyclic-six.370}
\end{center}
\caption{Configurations related to $6$-faces that are reducible in the simple greedy way---part 3.}
\label{fig-simple6c}
\end{figure}

\begin{figure}
\begin{center}
\epsfbox{cyclic-six.371} \epsfbox{cyclic-six.372} \epsfbox{cyclic-six.373} \epsfbox{cyclic-six.374} \epsfbox{cyclic-six.375}
\epsfbox{cyclic-six.376} \epsfbox{cyclic-six.377} \epsfbox{cyclic-six.378} \epsfbox{cyclic-six.379} \epsfbox{cyclic-six.380}
\epsfbox{cyclic-six.381} \epsfbox{cyclic-six.382} \epsfbox{cyclic-six.383} \epsfbox{cyclic-six.384} \epsfbox{cyclic-six.385}
\epsfbox{cyclic-six.386} \epsfbox{cyclic-six.387} \epsfbox{cyclic-six.388} \epsfbox{cyclic-six.389} \epsfbox{cyclic-six.390}
\epsfbox{cyclic-six.391} \epsfbox{cyclic-six.392}
\end{center}
\caption{Configurations related to $6$-faces that are reducible in the simple greedy way---part 4.}
\label{fig-simple6d}
\end{figure}

\begin{figure}
\begin{center}
\epsfbox{cyclic-six.393} \epsfbox{cyclic-six.394} \epsfbox{cyclic-six.395} \epsfbox{cyclic-six.396} \epsfbox{cyclic-six.397}
\epsfbox{cyclic-six.398} \epsfbox{cyclic-six.399} \epsfbox{cyclic-six.400} \epsfbox{cyclic-six.401} \epsfbox{cyclic-six.402}
\epsfbox{cyclic-six.403} \epsfbox{cyclic-six.404} \epsfbox{cyclic-six.405} \epsfbox{cyclic-six.406} \epsfbox{cyclic-six.407}
\epsfbox{cyclic-six.408} \epsfbox{cyclic-six.409} \epsfbox{cyclic-six.410}
\end{center}
\caption{Configurations related to $6$-faces that are reducible in the simple greedy way---part 5.}
\label{fig-simple6e}
\end{figure}

\begin{figure}
\begin{center}
\epsfbox{cyclic-six.411} \epsfbox{cyclic-six.412} \epsfbox{cyclic-six.413} \epsfbox{cyclic-six.414} \epsfbox{cyclic-six.415}
\epsfbox{cyclic-six.416} \epsfbox{cyclic-six.417} \epsfbox{cyclic-six.418} \epsfbox{cyclic-six.419} \epsfbox{cyclic-six.420}
\epsfbox{cyclic-six.421} \epsfbox{cyclic-six.422} \epsfbox{cyclic-six.423} \epsfbox{cyclic-six.424} \epsfbox{cyclic-six.425}
\epsfbox{cyclic-six.426} \epsfbox{cyclic-six.427} \epsfbox{cyclic-six.428} \epsfbox{cyclic-six.429} \epsfbox{cyclic-six.430}
\epsfbox{cyclic-six.431} \epsfbox{cyclic-six.432}
\end{center}
\caption{Configurations related to $6$-faces that are reducible in the simple greedy way---part 6.}
\label{fig-simple6f}
\end{figure}

\begin{figure}
\begin{center}
\epsfbox{cyclic-six.433} \epsfbox{cyclic-six.434} \epsfbox{cyclic-six.435} \epsfbox{cyclic-six.436} \epsfbox{cyclic-six.437}
\epsfbox{cyclic-six.438} \epsfbox{cyclic-six.439} \epsfbox{cyclic-six.440} \epsfbox{cyclic-six.441} \epsfbox{cyclic-six.442}
\epsfbox{cyclic-six.443} \epsfbox{cyclic-six.444} \epsfbox{cyclic-six.445} \epsfbox{cyclic-six.446} \epsfbox{cyclic-six.447}
\epsfbox{cyclic-six.448} \epsfbox{cyclic-six.449} \epsfbox{cyclic-six.450} \epsfbox{cyclic-six.451} \epsfbox{cyclic-six.452}
\epsfbox{cyclic-six.453}
\end{center}
\caption{Configurations related to $6$-faces that are reducible in the simple greedy way---part 7.}
\label{fig-simple6g}
\end{figure}

\begin{figure}
\begin{center}
\epsfbox{cyclic-six.454}
\epsfbox{cyclic-six.455} \epsfbox{cyclic-six.456} \epsfbox{cyclic-six.457} \epsfbox{cyclic-six.458} \epsfbox{cyclic-six.459}
\epsfbox{cyclic-six.460} \epsfbox{cyclic-six.461} \epsfbox{cyclic-six.462} \epsfbox{cyclic-six.463} \epsfbox{cyclic-six.464}
\epsfbox{cyclic-six.465} \epsfbox{cyclic-six.466} \epsfbox{cyclic-six.467} \epsfbox{cyclic-six.468} \epsfbox{cyclic-six.469}
\end{center}
\caption{Configurations related to $6$-faces that are reducible in the simple greedy way---part 8.}
\label{fig-simple6h}
\end{figure}

\begin{figure}
\begin{center}
\epsfbox{cyclic-six.470}
\epsfbox{cyclic-six.471} \epsfbox{cyclic-six.472} \epsfbox{cyclic-six.473} \epsfbox{cyclic-six.474} \epsfbox{cyclic-six.475}
\epsfbox{cyclic-six.476} \epsfbox{cyclic-six.477} \epsfbox{cyclic-six.478} \epsfbox{cyclic-six.479} \epsfbox{cyclic-six.480}
\epsfbox{cyclic-six.481} \epsfbox{cyclic-six.482} \epsfbox{cyclic-six.483} \epsfbox{cyclic-six.484} \epsfbox{cyclic-six.485}
\epsfbox{cyclic-six.486}
\end{center}
\caption{Configurations related to $6$-faces that are reducible in the simple greedy way---part 9.}
\label{fig-simple6i}
\end{figure}

\subsection{List coloring argument}

The reducibility of four configurations in our proof was established using arguments involving list coloring.
In list coloring, each vertex of a graph is assigned a list of available colors and
a proper vertex coloring such that each vertex receives a color from its list is sought (a proper vertex coloring is
a coloring such that no two adjacent vertices receive the same color).
To demonstrate the concept,
we start with a (very simple) auxiliary lemma, which is used in most of our reductions.

\begin{lemma}
\label{lm-K4}
Let $G$ be a graph with vertices $\alpha$, $\beta$, $\gamma$ and $\delta$ such that
all the pairs of vertices are adjacent except for the pair $\alpha$ and $\beta$.
Suppose that each of the vertices $\alpha$ and $\beta$ is assigned a list of two colors and
each of the vertices $\gamma$ and $\delta$ is assigned a list of three colors.
The vertices of the graph $G$ can be properly colored such that each vertex receives a color from its list.
\end{lemma}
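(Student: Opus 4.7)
The plan is to exploit the structure of $G$: it is $K_4$ with the edge $\alpha\beta$ deleted, and the list sizes match the vertex degrees ($2$ for $\alpha,\beta$ of degree $2$, and $3$ for $\gamma,\delta$ of degree $3$). I would split into two cases based on whether the lists of the non-adjacent pair $\alpha,\beta$ share a color.

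In the first case, assume $L(\alpha)\cap L(\beta)\neq\emptyset$ and pick a common color $c$. Assigning $c$ to both $\alpha$ and $\beta$ is proper since they are not adjacent. It remains to color the adjacent pair $\gamma,\delta$ from the reduced lists $L(\gamma)\setminus\{c\}$ and $L(\delta)\setminus\{c\}$, each of size at least two. Choosing any $x\in L(\gamma)\setminus\{c\}$ for $\gamma$ leaves $\delta$ with at least $|L(\delta)\setminus\{c\}|-1\ge 1$ admissible colors, completing the coloring.

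In the second case, assume $L(\alpha)\cap L(\beta)=\emptyset$, so $|L(\alpha)\cup L(\beta)|=4$. For each ordered pair $(a,b)\in L(\alpha)\times L(\beta)$, assigning $\alpha=a$, $\beta=b$ is proper, and the task reduces to choosing distinct colors for the adjacent pair $\gamma,\delta$ from the residual lists $L(\gamma)\setminus\{a,b\}$ and $L(\delta)\setminus\{a,b\}$. Such a choice fails only when both residual lists coincide in a single common color, i.e.\ when $L(\gamma)=L(\delta)=\{a,b,c\}$ for some $c$. Hence if $L(\gamma)\neq L(\delta)$, any $(a,b)$ works. Otherwise, setting $L:=L(\gamma)=L(\delta)$ with $|L|=3$, the pair $(a,b)$ is bad precisely when $\{a,b\}\subseteq L$; but $L(\alpha)$ and $L(\beta)$ cannot both lie in $L$, since their disjoint union has $4$ elements while $|L|=3$. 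Thus some $a\in L(\alpha)\setminus L$ or $b\in L(\beta)\setminus L$ exists, yielding a good pair and finishing the coloring.

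The main obstacle is the failure analysis in Case 2: one must correctly identify the unique way a partial coloring of $\alpha,\beta$ fails to extend to $\gamma,\delta$, and then use the disjointness $L(\alpha)\cap L(\beta)=\emptyset$ together with the pigeonhole inequality $|L(\alpha)|+|L(\beta)|=4>3=|L|$ to guarantee an extendable choice. Everything else is a routine greedy argument.
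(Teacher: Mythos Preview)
Your proof is correct and follows essentially the same approach as the paper: both split on whether $L(\alpha)\cap L(\beta)$ is nonempty, handle Case~1 identically, and in Case~2 exploit the pigeonhole inequality $|L(\alpha)\cup L(\beta)|=4>3$. The only difference is cosmetic: the paper picks a color $x\in L(\alpha)\cup L(\beta)$ not in $L(\delta)$ and greedily colors in the order $\alpha,\beta,\gamma,\delta$, whereas you characterize the bad pairs $(a,b)$ and subcase on whether $L(\gamma)=L(\delta)$; the paper's route avoids that subcasing but the underlying idea is the same.
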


\begin{proof}
We distinguish two cases.
If there is a color contained in the lists of both vertices $\alpha$ and $\beta$,
color both vertices $\alpha$ and $\beta$ with this color and
then color the vertices $\gamma$ and $\delta$ (in this order) with any colors from their lists not assigned to any of their neighbors.
On the other hand, if the lists of the vertices $\alpha$ and $\beta$ are disjoint,
their union contains four colors and thus it contains a color not in the list of the vertex $\delta$.
Let $x$ be this color. By symmetry, we can assume that $x$ is contained in the list of $\alpha$.
We color the vertex $\alpha$ by $x$, the vertex $\beta$ by any color from its list,
the vertex $\gamma$ by any color from its list different from the colors of $\alpha$ and $\beta$, and
finally the vertex $\delta$ by any color from its list different from the colors of $\beta$ and $\gamma$.
Since $x$ is not contained in the list of $\delta$, the color assigned to $\delta$ is different from $x$ and
the coloring that we have obtained is proper.
\end{proof}

Lemma~\ref{lm-K4} is used to establish the reducibility of the configurations in the next lemma.

\begin{lemma}
\label{lm-list-K4}
The configurations \conf{H:3T**oQ3}, \conf{H:*T3***oQ3} and \conf{H:3T****oQ3} are reducible.
\end{lemma}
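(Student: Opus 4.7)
The plan is to extend the simple greedy reduction scheme of Section~\ref{sub-greedy} with a list-coloring finish driven by Lemma~\ref{lm-K4}. For each of the three configurations, I will pick a contraction of the minimal counterexample $G$ that mirrors those of Figures~\ref{fig-simple6a}--\ref{fig-simple6i}: one or a few subgraphs $A,B,\ldots$ (typically chosen so that the \conf{o} vertex together with its off-face $3$-vertex neighbour lie in the same subgraph, and similarly around the $3$-face and the $4$-face) are contracted, possibly after adding a few auxiliary edges, to form a smaller plane graph $G'$ with maximum face size at most six. By the minimality of $G$, the graph $G'$ admits a cyclic $9$-coloring, which I transfer to $G$ by keeping most colors, giving the contracted classes the colors of their images, and then uncolouring a short list of vertices that are to be recoloured in a prescribed order.

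First I would colour greedily all but four of the uncoloured vertices, exactly as in the simple reductions. The point of the present lemma is that the four remaining vertices -- call them $\alpha,\beta,\gamma,\delta$, where $\alpha$ and $\beta$ are placed inside the $3$-face and the $4$-face attached to the hexagon -- do not individually have a free colour left. I expect each of $\alpha$ and $\beta$ to have precisely $7$ facially adjacent vertices already carrying $7$ distinct colours (leaving a list of $2$), while $\gamma$ and $\delta$ each have $6$ such neighbours (lists of $3$). If in addition the pair $\{\alpha,\beta\}$ is \emph{not} facially adjacent while every other pair among $\{\alpha,\beta,\gamma,\delta\}$ is, then Lemma~\ref{lm-K4} applies verbatim to the list-coloring instance on $K_4$ minus the edge $\alpha\beta$ and yields a valid completion of the cyclic $9$-coloring.

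The main obstacle is the geometric bookkeeping needed to guarantee that this $K_4$-minus-an-edge structure really arises. Concretely, I would have to verify, using Lemmas~\ref{lm-dist3} and~\ref{lm-dist4} together with the absence of separating $\le 6$-cycles, that (i) all vertices involved in the contracted subgraphs are distinct and the required pairs carrying the same colour are not facially adjacent; (ii) every vertex coloured greedily before $\{\alpha,\beta,\gamma,\delta\}$ sees at most eight colours when it is coloured, which requires a careful ordering chosen exactly as in the figures of Section~\ref{sub-greedy}; (iii) the seven, respectively six, facial neighbours of $\alpha,\beta$ and of $\gamma,\delta$ are forced to carry pairwise distinct colours (so the list sizes truly are $2,2,3,3$ and not smaller); and (iv) $\alpha$ and $\beta$ lie on a common $3$-face-and-$4$-face configuration that prevents them from sharing a face, while each is facially adjacent to $\gamma$ and $\delta$. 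This last point is what distinguishes the three configurations of the lemma from one another: the different placements of the trigger vertex (at position $v_1$, $v_2$ or $v_1$ together with a further \conf{o} at $v_5$) correspond to three slightly different contractions, each producing the same $K_4$-minus-edge pattern on the four uncoloured vertices.

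Once the four verifications above are in place, the proof is finished uniformly by a single invocation of Lemma~\ref{lm-K4} per configuration. I expect the rest of the argument to be entirely mechanical and of the same flavour as the captions of Figures~\ref{fig-simple6a}--\ref{fig-simple6i}; the only genuine novelty is the replacement of the greedy last step by the list-coloring step on $K_4$ minus an edge, which is precisely the reason Lemma~\ref{lm-K4} was recorded in isolation.
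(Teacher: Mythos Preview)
Your plan matches the paper's proof: contract a single subgraph $A$ (the paper uses only one, with no auxiliary edges and no intermediate greedy recolouring), transfer the cyclic $9$-colouring of the reduced graph, uncolour four vertices $\alpha,\beta,\gamma,\delta$, observe from facial degrees together with the $A$-pair savings that their lists have sizes at least $2,2,3,3$, check via Lemma~\ref{lm-dist3} that $\alpha$ and $\beta$ are not facially adjacent, and finish with Lemma~\ref{lm-K4}. One small slip in your item~(iii): you do not need the coloured facial neighbours to carry pairwise \emph{distinct} colours---that is the worst case; what you need is simply that each of $\alpha,\beta$ has at most seven coloured facial neighbours (counting the $A$-pair once) and each of $\gamma,\delta$ at most six, which is exactly how the paper phrases it.
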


\begin{figure}
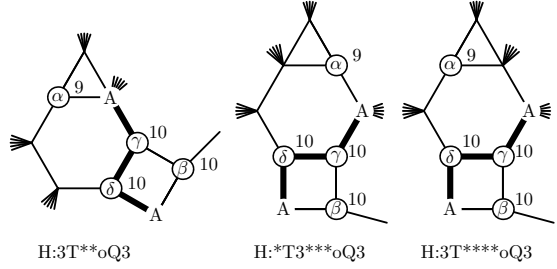

\begin{center}
\epsfbox{cyclic-six.104}
\epsfbox{cyclic-six.102}
\epsfbox{cyclic-six.103}
\end{center}
\caption{The reducible configurations from Lemma~\ref{lm-list-K4}.}
\label{fig-list-K4}
\end{figure}

\begin{proof}
The configurations from the statement of the lemma are depicted in Figure~\ref{lm-list-K4}.
We follow the notation from Subsection~\ref{sub-greedy}.
Suppose that a minimal counterexample contains one of the configurations.
As in Subsection~\ref{sub-greedy}, we contract the subgraphs depicted in bold, obtain a coloring of the new graph and
assign the colors to the vertices labelled with $A$ based on the coloring we obtained (note that the pair of such vertices
is not facially adjacent by Lemma~\ref{lm-dist3}).
We next uncolor the vertices labelled with $\alpha$, $\beta$, $\gamma$ and $\delta$ (if they are colored).
The facial degrees and the facial adjacencies to the pairs of vertices with the same color
yield that
there are at least two colors not assigned to the facial neighbors of $\alpha$,
at least two colors not assigned to the facial neighbors of $\beta$,
at least three colors not assigned to the facial neighbors of $\gamma$ and
at least three colors not assigned to the facial neighbors of $\delta$.
Since the vertices $\alpha$ and $\beta$ are not facially adjacent by Lemma~\ref{lm-dist3},
we can complete the coloring to a cyclic coloring by Lemma~\ref{lm-K4}.
\end{proof}

We finish this subsection with a more involved list coloring argument.
Since this argument applies only to the configuration considered in the next lemma,
we present the argument in the specific setting of the considered configuration only.

\begin{lemma}
\label{lm-list-special}
The configuration \conf{H:3Q4Po*3P} is reducible.
\end{lemma}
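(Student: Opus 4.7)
The plan is to follow the contract-and-complete strategy of Subsection~\ref{sub-greedy}, but to replace the final greedy completion step by a list-coloring argument, since for this configuration a purely greedy count falls short by one color on at least one uncolored vertex.

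First I would designate a small set of vertices to leave uncolored, focusing on the low-degree part of the configuration. The natural candidates are $v_3$ (the \conf{o}-type $3$-vertex) together with its off-face neighbour, call it $u_3$, which is itself a $3$-vertex by the definition of type \conf{o}; the $3$-vertex $v_4$; and possibly one further vertex needed to make the contraction legal. The remaining vertices of the configuration, together with one or two short subgraphs chosen so that every pair labelled with the same capital letter satisfies the hypothesis of Lemma~\ref{lm-dist3} or Lemma~\ref{lm-dist4}, are contracted in bold exactly as in Subsection~\ref{sub-greedy}. By the minimality of $G$, the resulting graph admits a cyclic $9$-coloring; pulling this coloring back and assigning the contracted colors to the corresponding labelled pairs produces a valid partial cyclic coloring of $G$ on every vertex outside the uncolored set.

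Second, for each uncolored vertex $v$ I would bound its list of still-available colors by subtracting from $9$ the number of distinct colors already used by its colored facial neighbours, where the facial degree of $v$ is controlled by its own degree and the sizes of its incident faces. The \conf{o}-condition at $v_3$ is decisive: it supplies the extra low-degree vertex $u_3$, which I include in the uncolored set; this replaces one of $v_3$'s facial-neighbour sets by a set that itself contains an uncolored vertex, shrinking $v_3$'s forbidden list. Symmetrically, $u_3$ benefits from having $v_3$ uncolored. The $4$-face incident with $v_1v_2$ and the $5$-faces incident with $v_2v_3$ and $v_4v_5$ produce similar savings for $v_4$ and for the auxiliary uncolored vertex.

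The main obstacle is solving the residual list-coloring instance. Because the lists produced in this configuration will be tighter than those handled by the generic Lemma~\ref{lm-K4}, a straightforward appeal to a $K_4$-minus-edge lemma will not suffice. I expect the argument to proceed by careful case analysis, splitting on whether specific pairs of lists intersect. The critical case should be resolved by first permuting the colors of the pulled-back coloring on one of the contracted components, in the spirit of the colour adjustments used in the proof of Lemma~\ref{lm-dist4}, so as to force a convenient list overlap; the remaining cases are then completed greedily, using the extra slack provided by $u_3$'s comparatively large list to finish.
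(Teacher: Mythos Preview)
Your outline has the right overall shape (contract, pull back, finish by list-coloring), but the decisive step is missing and the proposed substitute does not work.

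The colour-permutation idea you invoke from Lemma~\ref{lm-dist4} is not available here. In that lemma there are two \emph{independently} coloured subgraphs $H$ and $H'$, and one may permute the nine colours on each separately before gluing. In the present situation you contract inside a single graph $G'$ and obtain a single cyclic colouring; any permutation of colours is global and leaves every list of available colours unchanged. There is no ``one of the contracted components'' on which colours can be permuted without destroying the colouring elsewhere, so this mechanism cannot be what resolves the tight case.

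What the paper actually does is different both in the uncolouring and in the finishing trick. Two subgraphs are contracted (labels $A$ and $B$, justified by Lemmas~\ref{lm-dist3} and~\ref{lm-dist4}), and \emph{five} vertices $\alpha,\beta,\gamma,\delta,\varepsilon$ are left uncoloured, with $\alpha$ and $\beta$ the unique non-facially-adjacent pair. The resulting list sizes are at least $2,3,2,3,4$ for $\alpha,\beta,\gamma,\delta,\varepsilon$ respectively. The key observation is a pigeonhole step you do not mention: if the lists of $\alpha$ and $\beta$ are disjoint, their union has at least five colours, hence contains a colour $x$ outside the four-element list $Z$ of $\varepsilon$. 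One then arranges for $x$ to land on some facial neighbour of $\varepsilon$ (either $\alpha$ directly, or $\beta$, or via $\gamma$/$\delta$), so that $\varepsilon$ never needs $x$ and its four-element list suffices at the end. This ``plant a colour outside $Z$ on a neighbour of $\varepsilon$'' idea is what replaces the extra slack that Lemma~\ref{lm-K4} would have provided, and it is the part your plan is lacking.
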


\begin{figure}
\begin{center}
\epsfbox{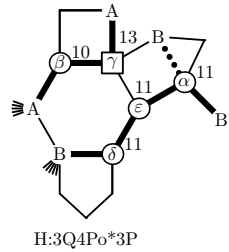}
\end{center}
\caption{The configuration from Lemma~\ref{lm-list-special}.}
\label{fig-list-special}
\end{figure}

\begin{proof}
Suppose that a minimal counterexample $G$ contains the configuration \conf{H:3Q4Po*3P}.
Add the dotted edge depicted in Figure~\ref{fig-list-special} and
contract the two subgraphs formed by bold edges.
By the minimality of $G$, the obtained graph has a cyclic coloring with at most nine colors.
All the vertices keep their colors and the vertices labelled with $A$ and $B$ get the colors of
the vertices corresponding to the contracted subgraphs.
Note that the vertices to be colored with the same color are not facially adjacent by Lemmas~\ref{lm-dist3} and~\ref{lm-dist4}.
We are now left to color the vertices $\alpha$, $\beta$, $\gamma$, $\delta$ and $\varepsilon$.
Observe that all the pairs of these five vertices are facially adjacent except for the pair $\alpha$ and $\beta$,
which is not facially adjacent by Lemma~\ref{lm-dist3}.

From the facial degrees and the facial adjacencies to the vertices with same color, we derive that
there are at least two colors available for each of the vertices $\alpha$ and $\gamma$,
at least three colors available for each of the vertices $\beta$ and $\delta$ and
at least four colors available for the vertex $\varepsilon$.
Let $Z$ be a set formed by four colors available for $\varepsilon$.

If there is a color that can be assigned to both $\alpha$ and $\beta$,
then we color both $\alpha$ and $\beta$ with this color and
the remaining vertices in the order $\gamma$, $\delta$ and $\varepsilon$.
Assume now that there is no color available to both $\alpha$ and $\beta$.
Since there are at least five colors in total available to $\alpha$ or $\beta$,
one of these colors, say $x$, is not contained in the set $Z$.

If $x$ is available for the vertex $\alpha$, we color $\alpha$ with this color and
color the remaining vertices in the order $\gamma$, $\delta$, $\beta$ and $\varepsilon$.
So, we can assume that the color $x$ is available for the vertex $\beta$.
We start with coloring the vertices $\alpha$, $\gamma$ and $\delta$ (in this order) with arbitrary available colors.
If neither $\gamma$ nor $\delta$ is colored with the color $x$, we color $\beta$ with $x$.
Otherwise, we color $\beta$ with an arbitrary color that is available for $\beta$ and that has not been assigned to $\gamma$ or $\delta$.
In both cases, the vertex $\varepsilon$ has a facial neighbor colored with $x$ and we can complete the coloring to a cyclic coloring of $G$.
\end{proof}

\subsection{Special arguments}

In this subsection, we establish reducibility of three additional configurations using ad hoc arguments.

\begin{lemma}
\label{lm-o3o}
The configuration \conf{H:o3o} is reducible.
\end{lemma}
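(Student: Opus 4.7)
The plan is to contract the whole triangle $v_1v_2w$ into a single new vertex and invoke the minimality of $G$ to color the resulting graph. First I would set up the configuration: the $6$-face $v_1v_2v_3v_4v_5v_6$ together with the $3$-face $v_1v_2w$ forces $v_1$, $v_2$, $w$ to be $3$-vertices; letting $w'$ denote the third neighbor of $w$, I observe that the faces $F_1$ and $F_2$ of $G$ sharing the edges $v_1w$ and $v_2w$ with the triangle must both be $6$-faces, since a $3$-face of a minimal counterexample shares edges only with $6$-faces. Accordingly I write $F_1=v_1v_6a_1a_2w'w$ and $F_2=v_2v_3b_1b_2w'w$.

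Next I would identify $v_1$, $v_2$ and $w$ to a new vertex $w^*$, which then has exactly three neighbors $v_3$, $v_6$, $w'$; any loops or parallel edges produced by the identification are removed. In the resulting graph $G'$ the former $6$-face becomes the $5$-face $w^*v_3v_4v_5v_6$ and each $F_i$ becomes a $5$-face on $w^*$ (namely $w^*v_6a_1a_2w'$ and $w^*v_3b_1b_2w'$), while every other face of $G$ is preserved. Hence the maximum face size of $G'$ is still at most six, and by the minimality of $G$ the graph $G'$ admits a cyclic coloring $\varphi$ with at most nine colors.

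Then I extend $\varphi$ to $G$ by keeping $\varphi$ on every vertex different from $w^*$ and assigning pairwise distinct colors to $v_1$, $v_2$, $w$, which are mutually facially adjacent through the $3$-face. For each $u\in\{v_1,v_2,w\}$ the list $L(u)$ of colors not used on the already colored facial neighbors of $u$ excludes exactly seven vertices, so $|L(u)|\ge 2$. Moreover the color $c^*=\varphi(w^*)$ lies in every one of the three lists, because the nine facial neighbors of $w^*$ in $G'$ are precisely the vertices of $G$ facially adjacent to some element of $\{v_1,v_2,w\}$ different from $v_1,v_2,w$ themselves.

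The hard part of the argument will be to show that the three lists admit a system of distinct representatives. This is automatic unless $L(v_1)=L(v_2)=L(w)=\{c^*,x\}$ for some common color $x$, a degenerate situation which translates into the forced coincidences
\[
\{\varphi(v_4),\varphi(v_5)\}=\{\varphi(a_1),\varphi(a_2)\}=\{\varphi(b_1),\varphi(b_2)\}
\]
on the colors assigned by $\varphi$. I would dispose of this case by locally modifying $\varphi$: a careful facial-degree count for the four interior vertices $a_1$, $a_2$, $b_1$, $b_2$ of $F_1$ and $F_2$ in $G'$ shows that one of them can be recolored to a color outside $\{\varphi(v_4),\varphi(v_5)\}$, and such a recoloring breaks the coincidence and restores enough freedom in the three lists to color $v_1$, $v_2$, $w$ with distinct colors. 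Producing such a recoloring (or, equivalently, constructing an auxiliary reduction that avoids the degenerate configuration) is the main obstacle; once handled, the resulting cyclic coloring of $G$ contradicts the choice of $G$.
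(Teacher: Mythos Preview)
Your contraction and the list set-up are exactly the paper's move, and everything you write up to and including the forced coincidences
\[
\{\varphi(v_4),\varphi(v_5)\}=\{\varphi(a_1),\varphi(a_2)\}=\{\varphi(b_1),\varphi(b_2)\}
\]
is correct. The gap is in what you do next. Your proposed fix---recoloring one of $a_1,a_2,b_1,b_2$---is not justified: these vertices are unconstrained in the configuration and may have arbitrarily large facial degree, so there is no reason any of them has a spare color. More importantly, no such recoloring is needed. The coincidence you derived already contradicts your degenerate hypothesis: $v_4,v_5,a_1,a_2$ are four \emph{distinct} colored facial neighbors of $v_1$, so $\{\varphi(v_4),\varphi(v_5)\}=\{\varphi(a_1),\varphi(a_2)\}$ forces repeats among them, hence $|L(v_1)|\ge 4$, not $2$. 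Thus the degenerate case is vacuous and an SDR always exists. With this one-line observation your argument is complete.

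The paper organizes the endgame slightly differently. Rather than invoking the color $c^*$ and an SDR, it splits on whether some $u\in\{v_1,v_2,w\}$ already sees a repeated color among its seven colored facial neighbors. If so, color greedily with that $u$ last. If not, then for each edge $e$ of the triangle, the two ``middle'' vertices of the $6$-face across $e$ (your $\{v_4,v_5\}$, $\{a_1,a_2\}$, $\{b_1,b_2\}$) carry pairwise disjoint color pairs avoiding $\varphi(v_3),\varphi(v_6),\varphi(w')$; one then colors each triangle vertex with a color from the pair opposite to it. This is a direct construction that sidesteps Hall's theorem entirely, but it is equivalent in spirit to your approach once the degenerate case is seen to be empty.
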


\begin{proof}
Let $G$ be a minimal counterexample and
let $uvw$ be a $3$-face of $G$ such that all the three vertices $u$, $v$ and $w$ are $3$-vertices.
Note that the facial degree of all the three vertices $u$, $v$ and $w$ in $G$ is $9$.
Contract the triangle $uvw$ to a single vertex, color the obtained graph $G'$ by the minimality of $G$, and
assign the vertices of $G$ except for $u$, $v$ and $w$ the colors they are assigned in $G'$.

If one of the vertices $u$, $v$ and $w$, say $w$, is facially adjacent to two vertices of the same color,
we can color the three vertices in the order $u$, $v$ and $w$ greedily.
So, we assume that none of the vertices $u$, $v$ and $w$ are facially adjacent to two vertices of the same color.
Let $X_{uv}$ be the colors of the two vertices incident with the $6$-face containing the edge $uv$ that are not the neighbors of $u$ or $v$.
We use $X_{uw}$ and $X_{vw}$ in the analogous way with respect to the other two $6$-faces sharing the edges with the $3$-face.
The assumption that none of the vertices $u$, $v$ and $w$ are facially adjacent to two vertices of the same color
implies that the sets $X_{uv}$, $X_{uw}$ and $X_{vw}$ are disjoint and they do not contain a color of any neighbor of the vertices $u$, $v$ and $w$.
We can now complete the coloring by assigning the vertex $u$ an arbitrary color from $X_{vw}$,
the vertex $v$ an arbitrary color from $X_{uw}$ and
the vertex $w$ an arbitrary color from $X_{uv}$.
\end{proof}

\begin{lemma}
\label{lm-special1}
The configurations \conf{H:3*3T4Po*3} and \conf{H:3Po*4T3*3} are reducible.
\end{lemma}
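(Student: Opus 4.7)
The plan is to extend the contract-and-recolor template of Subsection~\ref{sub-greedy} with an ad hoc recoloring step, much as was done for Lemma~\ref{lm-o3o}. Let $G$ be a minimal counterexample containing one of the two configurations and denote the described 6-face by $v_1v_2\cdots v_6$. Both configurations share the same combinatorial core: a 4-vertex on the 6-face that is incident on one side to a 3-face and on the other to a 5-face, together with an o-type 3-vertex on the 5-face side and additional 3-vertices elsewhere on the 6-face. I will carry out the argument for \conf{H:3*3T4Po*3}, and observe that the proof for \conf{H:3Po*4T3*3} follows by relabeling the vertices of the 6-face about the 4-vertex.

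First I would contract two subgraphs. The primary subgraph exploits the o-type vertex $v_4$: since $v_4$ has a 3-vertex neighbor $v_4'$ outside the face, and $v_4'$ has two further neighbors, Lemma~\ref{lm-dist3} (and, where needed, Lemma~\ref{lm-dist4}) guarantees a pair of vertices through $v_4'$ at distance two or three from $v_4$ that are not facially adjacent, and hence can be merged by contracting the path joining them. A second subgraph is chosen to merge two 3-vertices elsewhere on the 6-face, again validated by the non-facial-adjacency lemmas; an auxiliary dotted edge may be added before contraction to ensure that the resulting graph still has maximum face size six. By the minimality of $G$, the reduced graph admits a cyclic coloring with nine colors, which pulls back to a partial coloring of $G$ in which both merged pairs share a color, leaving a small set $U$ of vertices on the 6-face uncolored.

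I would then color $U$ in a carefully chosen order. Because each 3-vertex on the 6-face has facial degree exactly nine, and because the incident 3-face and 5-face force extra coincidences among the facially adjacent colors, a greedy step suffices for all but at most one vertex of $U$. The main obstacle, and the precise reason the configurations fall outside the simple greedy subsection, is that the last vertex to be colored can initially see all nine colors; a naive count does not leave room. To resolve this, I plan a color-exchange argument in the spirit of Lemma~\ref{lm-o3o}: uncolor an already-colored vertex of $U$, isolate two sets of blocking colors coming from the two distinct facial environments (across the 3-face on one side and across the 5-face on the other), and swap a color between them so as to free a color for the problem vertex. The key fact that makes this exchange go through is the absence of separating cycles of length at most six in $G$, which forces the two color sets to be genuinely disjoint; verifying this disjointness, together with checking that the swap does not create a conflict at any previously colored facial neighbor, is the delicate part of the argument.
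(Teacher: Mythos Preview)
Your proposal is a plan rather than a proof, and the plan diverges from the paper's actual argument in a way that is unlikely to close.  Two concrete issues.

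First, you contract only two subgraphs.  In the paper's reduction one inserts a dotted edge and contracts \emph{three} subgraphs $A,B,C$ (each pair certified by Lemma~\ref{lm-dist3}); with only two identified pairs the greedy counts for the vertices numbered $3$ and $4$ come up one short, and nothing in your sketch explains how the missing saving is recovered.

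Second, the ``delicate part'' you defer is not a Kempe-type color swap \`a la Lemma~\ref{lm-o3o}.  The special feature of these two configurations is that after the three contractions there remain two specific colored vertices $x$ and $y$ whose colors the reduced coloring does \emph{not} force apart.  The paper's ad hoc step is a simple dichotomy: if $x$ and $y$ receive different colors, the greedy order $1,2,3,4,5$ works directly; if they receive the same color, one uncolors $x$ and runs the greedy order $1,2,x,3,4,5$, because the coincidence of colors at $x$ and $y$ itself supplies the extra repeated pair that the vertices labelled $3$ and $4$ need.  There is no exchange between ``two disjoint sets of blocking colors'' and no appeal to the absence of short separating cycles at this stage.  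Your outline does not identify the pair $x,y$, does not set up the three contractions that make the dichotomy possible, and the swap you describe is left entirely unspecified, so as written the argument has a genuine gap.
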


\begin{figure}
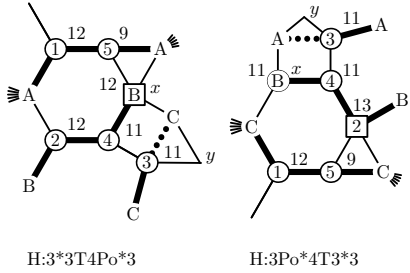

\begin{center}
\epsfbox{cyclic-six.201}
\epsfbox{cyclic-six.202}
\end{center}
\caption{The configurations from Lemma~\ref{lm-special1}.}
\label{fig-special1}
\end{figure}

\begin{proof}
We proceed in a way similar to the simple greedy reductions from Subsection~\ref{sub-greedy}.
We consider a minimal counterexample $G$ that contains one of the configurations \conf{H:3*3T4Po*3} and \conf{H:3Po*4T3*3},
which are depicted in Figure~\ref{fig-special1}.
We start with inserting the dotted edge and contracting the three subgraphs formed by bold edges.
By the minimality of $G$, we obtain a cyclic coloring of the new graph,
which gives the coloring to all the vertices of $G$ except the ones contained in the contracted subgraphs.
The vertices labelled with $A$, $B$ and $C$ get the colors of the vertices corresponding
to the contracted subgraphs (each of the three pairs of these vertices is not facially adjacent by Lemma~\ref{lm-dist3}).
However, the vertices $x$ and $y$ may have the same color.

If the vertices $x$ and $y$ have different colors,
we color the remaining vertices greedily in the order given by the numbering in Figure~\ref{fig-special1}.
If the vertices $x$ and $y$ have the same color, we uncolor the vertex $x$.
Note that the vertices labelled by $3$ and $4$ are still facially adjacent to two pairs of vertices with the same color (one of the pairs contains the vertex $y$).
We now color the six uncolored vertices greedily in the order given by the numbering in Figure~\ref{fig-special1}
with the vertex $x$ being colored between the vertices labelled by $2$ and $3$.
\end{proof}

\section{Discharging rules}
\label{sec-rules}

The discharging rules are listed in Tables~\ref{tab-Trules}, \ref{tab-Prules} and \ref{tab-Hrules} using the encoding we now describe.
There are three basic types of discharging rules: $T$-rules, $P$-rules and $H$-rules.
The $T$-rules are described by strings of nine characters starting with \conf{T:}.
If a $6$-face $v_1v_2\cdots v_6$ matches the description given by the rule,
i.e., the vertices $v_i$, $i\in\{1,2,3,4\}$, correspond to the $(2i+1)$-th characters and
the faces sharing the edges $v_iv_{i+1}$, $i\in\{1,2,3\}$, correspond to the $(2i+2)$-th characters,
then the $6$-face $v_1v_2\cdots v_6$ sends the prescribed amount of charge to the face sharing the edge $v_2v_3$.
The face sharing the edge $v_2v_3$ with the $6$-face will always be a $3$-face. Moreover,
at most one of the $T$-rules will apply to any pair of a $6$-face and a $3$-face sharing an edge.

\begin{table}
\begin{center}
\begin{tabular}{crcrcr}
\conf{T:3H3x3Hx} & 10/60 & \conf{T:3H3x4P*} & 10/60 & \conf{T:*P4O4P*} & 32/60 \\
\conf{T:3Hot4P*} &  1/60 & \conf{T:xH3x4P*} & 13/60 & \conf{T:*P4t4H*} & 26/60 \\
\conf{T:xHot4P*} & 20/60 & \conf{T:3H3x4H*} &  8/60 & \conf{T:*P4O4H*} & 31/60 \\
\conf{T:3HoO4P*} & 20/60 & \conf{T:xH3x4H*} & 10/60 & \conf{T:*H4t4H*} & 26/60 \\
\conf{T:xHoO4P*} & 29/60 & \conf{T:3H3x+**} & 14/60 & \conf{T:*H4O4H*} & 30/60 \\
\conf{T:3Hot4H*} & 10/60 & \conf{T:xH3x+**} & 12/60 & \conf{T:*Q4t+**} & 22/60 \\
\conf{T:xHot4H*} & 20/60 & \conf{T:*Q4t4Q*} &  8/60 & \conf{T:*Q4O+**} & 24/60 \\
\conf{T:*HoO4H*} & 20/60 & \conf{T:*Q4O4Q*} & 16/60 & \conf{T:*P4t+**} & 31/60 \\
\conf{T:3Hot+**} & 20/60 & \conf{T:*Q4t4P*} & 17/60 & \conf{T:*P4O+**} & 32/60 \\ 
\conf{T:xHot+**} & 30/60 & \conf{T:*Q4O4P*} & 24/60 & \conf{T:*H43+**} & 31/60 \\
\conf{T:*HoO+**} & 30/60 & \conf{T:*Q4t4H*} & 17/60 & \conf{T:**+t+**} & 36/60 \\
\conf{T:3H3x4Q*} & 22/60 & \conf{T:*Q4O4H*} & 23/60 & \conf{T:**+O+**} & 32/60 \\
\conf{T:xH3x4Q*} & 26/60 & \conf{T:*P4t4P*} & 26/60 & \conf{T:**xxx**} & 20/60 \\ 
\end{tabular}
\end{center}
\caption{The $T$-rules.}
\label{tab-Trules}
\end{table}

\begin{table}
\begin{center}
\begin{tabular}{crcrcrcr}
\conf{P:3Q3H*} & 40/60 & \conf{P:xPoPx} & 40/60 & \conf{P:xPtHx} & 20/60 & \conf{P:3H3H+} & 14/60 \\
\conf{P:xQ3H*} & 20/60 & \conf{P:3PtH3} & 12/60 & \conf{P:4PoHx} & 18/60 & \conf{P:4H3H4} & 20/60 \\
\conf{P:3PtP3} & 12/60 & \conf{P:3PtH4} & 18/60 & \conf{P:+PtH3} & 12/60 & \conf{P:4H3H+} & 26/60 \\
\conf{P:3PtPx} & 10/60 & \conf{P:3PtH+} & 20/60 & \conf{P:+PoHx} & 24/60 & \conf{P:+H3H+} & 32/60 \\ 
\conf{P:xPtPx} & 20/60 & \conf{P:*PoH3} & 18/60 & \conf{P:3HtH3} & 12/60 & \conf{P:**+**} & -12/60 \\ 
\conf{P:3PoP3} & 20/60 & \conf{P:3PoHx} & 20/60 & \conf{P:3HoH3} & 20/60 & \conf{P:**u**} & 4/60 \\   
\conf{P:3PoPx} & 24/60 & \conf{P:4PtH3} & 18/60 & \conf{P:3H3H4} & 16/60 & \conf{P:**w**} & 20/60 \\  
\end{tabular}
\end{center}
\caption{The $P$-rules.}
\label{tab-Prules}
\end{table}

\begin{table}
\begin{center}
\begin{tabular}{crcrcrcr}
\conf{H:3TtH3} & 20/60 & \conf{H:3QtH*} & 24/60 & \conf{H:+P3P+} & 20/60 & \conf{H:*H3H*} & 20/60 \\
\conf{H:3TtH4} & 30/60 & \conf{H:3QoH*} & 30/60 & \conf{H:3P3H3} & 20/60 & \conf{H:*T5T*} & -24/60 \\
\conf{H:3TtH+} & 36/60 & \conf{H:xQtH*} & 30/60 & \conf{H:3PtHx} & 22/60 & \conf{H:*T6T*} & -40/60 \\
\conf{H:xTtH*} & 30/60 & \conf{H:xQoH*} & 36/60 & \conf{H:3PoHx} & 24/60 & \conf{H:*T+Q*} & -24/60 \\
\conf{H:xToH3} & 40/60 & \conf{H:3P3P3} & 24/60 & \conf{H:4PtH*} & 20/60 & \conf{H:*T+P*} & -18/60 \\
\conf{H:xToH4} & 30/60 & \conf{H:3PtPx} & 24/60 & \conf{H:4PoH3} & 24/60 & \conf{H:*T+H*} & -18/60 \\
\conf{H:xToH+} & 24/60 & \conf{H:3PoPx} & 28/60 & \conf{H:4PoHx} & 22/60 & \conf{H:*F+F*} & -12/60 \\
\conf{H:*QtP*} & 40/60 & \conf{H:4PtPx} & 20/60 & \conf{H:+PoH*} & 26/60 & \conf{H:**u**} & 7/60 \\   
\conf{H:*QoP*} & 20/60 & \conf{H:4PoPx} & 24/60 & \conf{H:+PtH*} & 14/60 & \conf{H:**w**} & 20/60 \\   
\end{tabular}
\end{center}
\caption{The $H$-rules.}
\label{tab-Hrules}
\end{table}

The $P$-rules and $H$-rules are described by strings of seven characters starting with \conf{P:} and \conf{H:}.
If a face $f$ matches the description given by the rule,
then the face $f$ sends the prescribed amount of charge to the second vertex (the one corresponding to the fifth character).
In particular, the $P$-rules apply to $5$-faces and the $H$-rules to $6$-faces.
If the prescribed amount of charge is negative (this happens in one of the $P$-rules and in four of the $H$-rules),
the face $f$ receives the corresponding amount of charge.
Finally, if the charge sent by $f$ goes to a $4$-vertex,
the $4$-vertex resends all of the received charge to the $3$-face incident with it (this is the case for the last two $P$-rules and
the last two $H$-rules).
As in the case of $T$-rules,
at most one of the $P$-rules and $H$-rules applies to any pair of a face and an incident vertex.

In the next two lemmas, we analyze the final amount of charge of vertices and $3$-faces.

\begin{lemma}
\label{lm-ver3}
Let $G$ be a $2$-connected plane graph with maximum face size six and $v$ a vertex of $G$.
If the minimum facial degree of $G$ is at least nine and $G$ does not contain a $3$-face incident with three $3$-vertices,
then the final amount of charge of $v$ is non-negative.
\end{lemma}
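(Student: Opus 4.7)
My approach is a case analysis on $\deg(v)$. The initial charge of $v$ is $\deg(v)-4$, and the discharging rules that touch $v$ depend primarily on its degree and on the local structure of faces around it, so splitting by degree isolates the relevant rules.

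For $\deg(v)\ge 5$, I would bound the total charge $v$ loses. The only rules that extract charge from a $\ge 5$-vertex are \conf{P:**+**} (from each incident $5$-face) and the six $H$-rules with negative coefficients, namely \conf{H:*T5T*}, \conf{H:*T6T*}, \conf{H:*T+Q*}, \conf{H:*T+P*}, \conf{H:*T+H*} and \conf{H:*F+F*}. The worst loss per incident face occurs when a $6$-face is flanked at $v$ by two $3$-faces (up to $40/60$ when $\deg(v)\ge 6$, or $24/60$ when $\deg(v)=5$). Since $3$-faces share edges only with $6$-faces in our setting, at most $\lfloor\deg(v)/2\rfloor$ of $v$'s incident faces can be $3$-faces; a short cyclic accounting around $v$, summing the rule contributions along its face cycle, should then show that the total loss never exceeds $\deg(v)-4$, with equality attained in symmetric configurations at $\deg(v)=5$ or $\deg(v)=6$.

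For $\deg(v)=4$ the argument is immediate: the only rules directed at $v$ are \conf{P:**u**}, \conf{P:**w**}, \conf{H:**u**} and \conf{H:**w**}, and by the discharging convention every unit of charge $v$ receives from them is resent in full to the incident $3$-face, so the net change at $v$ is~$0$.

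For $\deg(v)=3$ the initial charge is $-60/60$ and I must show $v$ receives at least $60/60$. I would first split on whether $v$ lies on a $3$-face. If it does, the other two incident faces must both be $6$-faces (since $3$-faces only share edges with $6$-faces) and $v$ has type~$t$ or~$o$ on each of them; I would enumerate the possible pairs of applicable $H$-rules, invoking the hypothesis that no $3$-face is incident with three $3$-vertices (the configuration handled by Lemma~\ref{lm-o3o}) to verify that the two $6$-faces jointly deliver at least $60/60$. If $v$ is on no $3$-face, all three incident faces have size $\ge 4$; the facial-degree bound forces $|f_1|+|f_2|+|f_3|\ge 15$, which precludes two or more $4$-faces incident with $v$. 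I would then enumerate the remaining face-size multisets $\{6,6,6\}$, $\{6,6,5\}$, $\{6,6,4\}$, $\{6,5,5\}$, $\{6,5,4\}$ and $\{5,5,5\}$ and, for each, check that the applicable $P$- and $H$-rules together supply at least $60/60$.

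The hard part will be the $3$-vertex subcases in which a single incident face only contributes around $20/60$ (for instance \conf{H:3TtH3} when $v$ lies on a $3$-face, or \conf{P:xPtPx} on a $5$-face); in those situations the bulk of the required charge must come from the other incident face, so the verification reduces to checking that the forced vertex types on the opposite side always trigger a sufficiently generous rule. I expect this part to be tedious but mechanical, splitting on the type of $v$'s third neighbour and on the types of the vertices opposite $v$ on each of its incident faces.
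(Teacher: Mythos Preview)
Your proposal is correct and follows essentially the same route as the paper. The paper also splits on $\deg(v)$, dispatches the $4$-vertex case by the resend convention, handles the $3$-vertex case by a figure enumerating exactly the face-type patterns you describe, and bounds the loss at a $\ge 5$-vertex by a cyclic accounting that amounts to $(12t+12q+12p)/60$ for $5$-vertices and $(28t+12q+12p)/60$ together with $t\le p$ for $\ge 6$-vertices---precisely the ``short cyclic accounting'' you sketch.
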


\begin{figure}
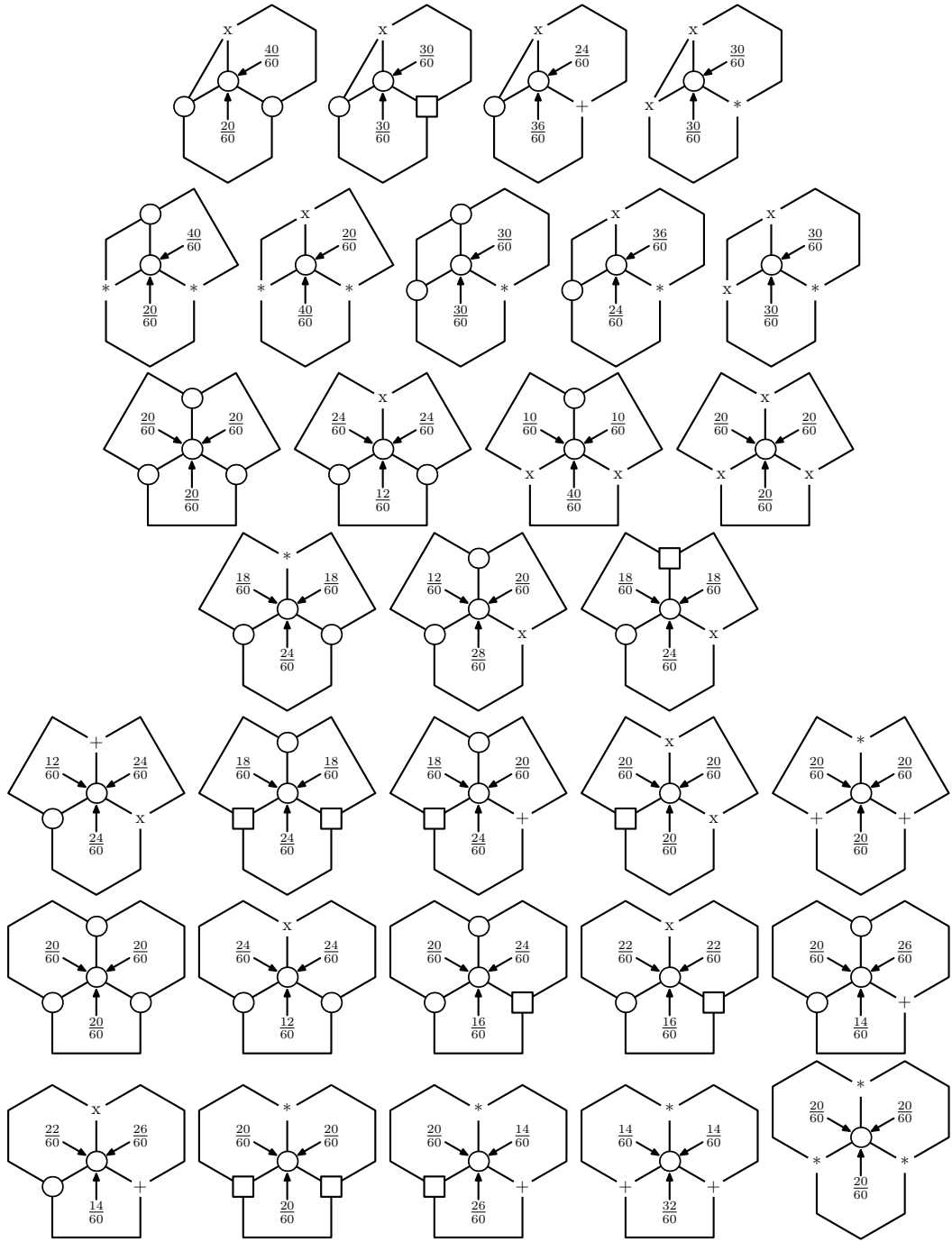

\begin{center}
\epsfbox{cyclic-six.2002}
\epsfbox{cyclic-six.2003}
\epsfbox{cyclic-six.2004}
\epsfbox{cyclic-six.2005}\\
\epsfbox{cyclic-six.2011}
\epsfbox{cyclic-six.2013}
\epsfbox{cyclic-six.2021}
\epsfbox{cyclic-six.2022}
\epsfbox{cyclic-six.2023}\\
\epsfbox{cyclic-six.2031}
\epsfbox{cyclic-six.2032}
\epsfbox{cyclic-six.2033}
\epsfbox{cyclic-six.2034}\\
\epsfbox{cyclic-six.2041}
\epsfbox{cyclic-six.2042}
\epsfbox{cyclic-six.2043}\\
\epsfbox{cyclic-six.2044}
\epsfbox{cyclic-six.2045}
\epsfbox{cyclic-six.2046}
\epsfbox{cyclic-six.2047}
\epsfbox{cyclic-six.2048}\\
\epsfbox{cyclic-six.2051}
\epsfbox{cyclic-six.2052}
\epsfbox{cyclic-six.2053}
\epsfbox{cyclic-six.2054}
\epsfbox{cyclic-six.2055}
\epsfbox{cyclic-six.2056}
\epsfbox{cyclic-six.2057}
\epsfbox{cyclic-six.2058}
\epsfbox{cyclic-six.2059}
\epsfbox{cyclic-six.2061}
\end{center}
\caption{Charge received by $3$-vertices. The degrees of vertices are encoded using the notation for configurations.}
\label{fig-3ver}
\end{figure}

\begin{proof}
Since the minimum facial degree of $G$ is at least nine, the minimum degree of $G$ is at least three.
If $v$ is a $3$-vertex, then one of the cases depicted in Figure~\ref{fig-3ver} holds and
the vertex $v$ receives at least one unit of charge in total from the incident $\ge 5$-faces.
Since $4$-vertices do not send out or receive any charge except for that they immediately resend to $3$-faces,
we assume from now on that $v$ is a $\ge 5$-vertex.

Let $t$ be the number of $3$-faces incident with $v$, $q$ the number of $4$-faces and $p$ be the number of $\ge 5$-faces.
Suppose that $v$ is a $5$-vertex.
The vertex $v$ sends $12/60$ to each incident $\ge 5$-face $f$ by the rules \conf{P:**+**} and \conf{H:*F+F*}
unless one of the two other faces incident with $v$ that shares an edge with $f$ is a $3$-face.
The amount of charge sent is increased by $12/60$ for each $6$-face sharing an edge with a $3$-face and 
a $\le 4$-face incident with $v$ (see the rules \conf{H:*T5T*} and \conf{H:*T+Q*}), and
is increased by $6/60$ for each $6$-face sharing an edge with a $3$-face and $\ge 5$-face
incident with $v$ (see the rules \conf{H:*T+P*} and \conf{H:*T+H*}).
So, each $3$-face incident with $v$ increases the amount of charge sent from $v$ to a $6$-face that shares an edge with it by $6/60$ units and
each $4$-face incident with $v$ can increase the amount of charge sent from $v$ to a $6$-face that shares an edge with it by $6/60$ units (this happens only if the other face incident with $v$ that shares an edge with the $6$-face is a $3$-face).
Since each $3$-face and $4$-face shares an edge with at most two faces incident with $v$,
we conclude that the $5$-vertex $v$ sends out at most $(12t+12q+12p)/60\le 1$ unit of charge and
its final amount of charge is non-negative.

Suppose that $v$ is a $d$-vertex, $d\ge 6$.
The calculation is the same except that each $6$-face sharing edges with two $3$-faces incident with $v$
gets $40/60$ units of charge from $v$ instead of $24/60$ units (the rule \conf{H:*T6T*} applies instead of \conf{H:*T5T*}).
Hence, the additional amount of charge sent out can be up to $28/60$ units per incident $3$-face instead of $12/60$ units as in the previous case.
This yields that $v$ sends out at most $(28t+12q+12p)/60$ units of charge.
Since a $3$-face can share an edge only with a $6$-face, we get that $t\le p$.
Consequently, the $d$-vertex $v$ sends out at most
$$\frac{28t+12q+12p}{60}\le \frac{20t+12q+20p}{60}\le \frac{t+q+p}{3}=\frac{d}{3}\le d-4$$
units of charge and its final amount of charge is non-negative.
\end{proof}

\begin{lemma}
\label{lm-fac3}
Let $G$ be a $2$-connected plane graph with maximum face size six and let $v_1v_2v_3$ be a $3$-face of $G$ that does not share an edge with a $\le 5$-face.
If the minimum facial degree of $G$ at least nine and $G$ does not contain \conf{H:o3o}, \conf{H:3T4T} or $\conf{H:o34Q}$,
then the face $v_1v_2v_3$ receives at least one unit of charge using the $T$-rules, $P$-rules and $H$-rules.
\end{lemma}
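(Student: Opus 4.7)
The $3$-face $v_1v_2v_3$ starts with charge $-1$, and since it shares no edge with a $\le 5$-face, each of its three edges lies on a $6$-face. Charge can reach $v_1v_2v_3$ in only two ways: directly from one of these three $6$-faces via a $T$-rule, or indirectly via an incident $4$-vertex, which by the last two $P$- and $H$-rules (\conf{P:**u**}, \conf{P:**w**}, \conf{H:**u**}, \conf{H:**w**}) forwards to $v_1v_2v_3$ the charge it receives from its other incident $\ge 5$-faces. The plan is to show the total reaches $60/60$ by an exhaustive case analysis on the degree sequence of $v_1, v_2, v_3$, invoking the three excluded configurations at exactly the sub-cases where the bare $T$-rule bounds would fall short.

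The two extreme cases are immediate. If all three of $v_1, v_2, v_3$ are $3$-vertices, then from every adjacent $6$-face the two endpoints of the shared edge are \conf{o}-vertices (each has the opposite vertex of the $3$-face, itself a $3$-vertex, as its third neighbor), producing the forbidden configuration \conf{H:o3o}. If all three are $\ge 4$-vertices, then from each adjacent $6$-face the $3$-face has type \conf{x} between two $\ge 4$-vertices, so the catch-all rule \conf{T:**xxx**} applies and contributes $3\cdot 20/60=60/60$.

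The substantive work is in the intermediate cases, where exactly one or exactly two of $v_1, v_2, v_3$ have degree $\ge 4$. The type of $v_1v_2v_3$ as seen from an adjacent $6$-face varies with the tip: it is \conf{x} when the tip is a $\ge 4$-vertex, and \conf{t} or \conf{O} when the tip is a $3$-vertex, the split being governed by the tip's third neighbor. In each such sub-case I will match the configuration to the unique applicable $T$-rule from every $6$-face (refining further by the degrees of the $\ge 4$-vertices on $v_1v_2v_3$ and by the faces on the far sides of their shared edges along the $6$-face), add any \conf{u}- or \conf{w}-forwarding from a $4$-vertex on $v_1v_2v_3$, and check that the three contributions sum to at least $60/60$. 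The exclusions \conf{H:3T4T} and \conf{H:o34Q} are used exactly at the delicate sub-cases: they forbid a $4$-vertex on $v_1v_2v_3$ from being incident along a neighbouring $6$-face with another $3$-face, respectively with a $4$-face opposite an \conf{o}-vertex, which are precisely the patterns in which the advertised $T$-rule charges would otherwise be too small.

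The main obstacle is the sheer bookkeeping: for each remaining degree sequence one must enumerate all possible configurations of the three adjacent $6$-faces (third neighbors of the $3$-vertices on $v_1v_2v_3$, the next vertices along each $6$-face, and the faces on their far sides), match each to the unique $T$-rule that fires, add any $\conf{u}/\conf{w}$-forwarded charge from a $4$-vertex on $v_1v_2v_3$, and verify that the $60/60$ threshold is met in every case, deploying the three excluded configurations at the tight spots.
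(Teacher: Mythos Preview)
Your approach is correct and essentially identical to the paper's: an exhaustive case analysis over the possible configurations around the $3$-face, checking in each case that the charge received via the $T$-rules together with any \conf{u}/\conf{w}-forwarded charge totals at least $60/60$, and invoking the three excluded configurations precisely where the raw $T$-rule amounts would otherwise fall short. The paper presents this case analysis pictorially (Figure~\ref{fig-3fac} lists every surviving configuration together with the incoming amounts), while you organize it by the degree sequence of $v_1,v_2,v_3$; the content is the same, and your handling of the two extreme cases matches exactly.
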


\begin{figure}
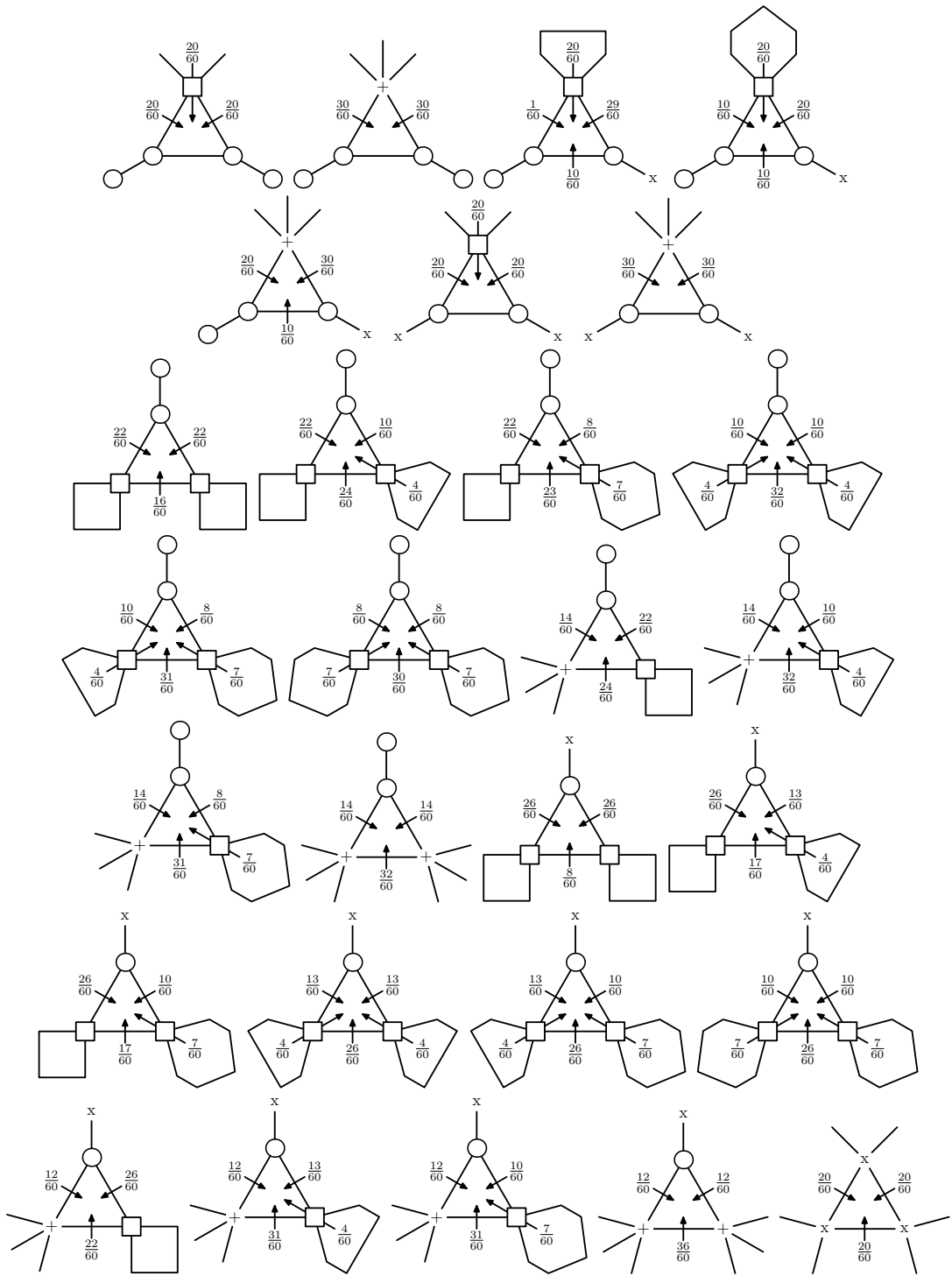

\begin{center}
\epsfbox{cyclic-six.3011}
\epsfbox{cyclic-six.3012}
\epsfbox{cyclic-six.3013}
\epsfbox{cyclic-six.3014}\\
\epsfbox{cyclic-six.3015}
\epsfbox{cyclic-six.3016}
\epsfbox{cyclic-six.3017}\\
\epsfbox{cyclic-six.3021}
\epsfbox{cyclic-six.3022}
\epsfbox{cyclic-six.3023}
\epsfbox{cyclic-six.3024}
\epsfbox{cyclic-six.3025}
\epsfbox{cyclic-six.3026}
\epsfbox{cyclic-six.3027}
\epsfbox{cyclic-six.3028}
\epsfbox{cyclic-six.3029}
\epsfbox{cyclic-six.3030}
\epsfbox{cyclic-six.3041}
\epsfbox{cyclic-six.3042}
\epsfbox{cyclic-six.3043}
\epsfbox{cyclic-six.3044}
\epsfbox{cyclic-six.3045}
\epsfbox{cyclic-six.3046}
\epsfbox{cyclic-six.3047}
\epsfbox{cyclic-six.3048}
\epsfbox{cyclic-six.3049}
\epsfbox{cyclic-six.3050}
\epsfbox{cyclic-six.3061}
\end{center}
\caption{Charge received by $3$-faces. The degrees of vertices are encoded using the notation for configurations.}
\label{fig-3fac}
\end{figure}

\begin{proof}
All possible configurations around $3$-faces in a graph satisfying the assumption of the lemma are depicted in Figure~\ref{fig-3fac}.
The picture also contains the amounts of charge received by such $3$-faces and
it can be verified that the final amount of charge of the $3$-face is always non-negative.
\end{proof}

The analysis of the final amount of charge of $5$-faces and $6$-faces turned out to be too complex.
So, we had to verify that the final amount of charge of such faces is non-negative with the assistance of a computer.
We have prepared three computer programs and
we have made one of them available at \url;
the program is also available on arXiv as an ancillary file.

\begin{lemma}
\label{lm-fac56}
Let $G$ be a $2$-connected plane graph with maximum face size six and let $f$ be a $d$-face of $G$, $d\in\{5,6\}$.
If $G$ contains none of the reducible configurations,
its minimum facial degree is at least nine and
there is no $\le 5$-face sharing an edge with a $3$-face,
then the difference between the amount of charge sent out by $f$ and received by it is at most $d-4$ units.
\end{lemma}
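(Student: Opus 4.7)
The plan is to perform an exhaustive case analysis over all local configurations of a face $f$ of size $d\in\{5,6\}$ that can appear in a graph $G$ satisfying the hypothesis. The charge balance of $f$ depends only on (i) the types (in the sense of Table~\ref{tab-ver}) of the $d$ vertices incident with $f$ and (ii) the types (in the sense of Table~\ref{tab-fac}) of the $d$ faces sharing an edge with $f$. Consequently, each candidate configuration is encoded by a string of $2d+2$ characters, exactly as in Section~\ref{sec-reduc}. First I would enumerate all such strings, giving on the order of $8^d\cdot 6^d$ raw configurations.

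Second, I would discard every configuration that either matches one of the reducible patterns from Section~\ref{sec-reduc} (including the implicit extensions where \conf{v} is replaced by \conf{u} or \conf{w}, and \conf{u} by \conf{w}) or violates a structural hypothesis, for example by forcing the facial degree of some vertex below $9$, by placing a $3$-face adjacent to a $\le 5$-face, or by requiring a pair of facially adjacent vertices to have types that coincide only through an excluded configuration. This pruning step is what makes the lemma true: the charge bound genuinely fails on many of the raw strings, and only the absence of reducible patterns brings each surviving configuration into balance.

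Third, for each surviving configuration I would compute the net charge sent out of $f$. The outgoing charge is the sum of the $P$- or $H$-rules triggered at each of the $d$ incident vertices (one rule per vertex, by the uniqueness remark after Table~\ref{tab-Hrules}) together with, when $d=6$, the $T$-rule triggered at every edge shared with a $3$-face (one rule per such edge). The incoming charge consists of the contributions from the negatively valued rules \conf{P:**+**}, \conf{H:*T5T*}, \conf{H:*T6T*}, \conf{H:*T+Q*}, \conf{H:*T+P*}, \conf{H:*T+H*}, and \conf{H:*F+F*}, where $f$ is the recipient rather than the sender; note that the rules \conf{P:**u**}, \conf{P:**w**}, \conf{H:**u**}, \conf{H:**w**} contribute to outgoing charge through the $4$-vertex relay described in Section~\ref{sec-rules}. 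I would then verify the inequality $\text{(out)}-\text{(in)}\le d-4$ directly.

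The main obstacle is the combinatorial explosion: even after pruning by the $193$ reducible configurations and the structural restrictions, the number of surviving strings is far too large for a hand check, and the bookkeeping is complicated by the wildcards in both the reducibility descriptions and the rule descriptions, as well as by the fact that a single configuration can simultaneously trigger several rules at different vertices and edges. For this reason I would implement the verification as a program that (a) enumerates the $(2d+2)$-character strings, (b) filters them using pattern matching against the reducibility list, and (c) tallies the applicable rule values. To guard against coding errors in the pattern matcher I would write at least two independent implementations and cross-check their outputs on every surviving string, exactly as the authors describe with their three programs.
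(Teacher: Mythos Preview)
Your proposal is correct and mirrors the paper's own treatment: the authors give no hand proof of this lemma but instead defer to an exhaustive computer enumeration of the local type strings of $5$- and $6$-faces, filtering by the reducible patterns and structural constraints and then tallying the applicable rules, exactly as you outline (they likewise wrote three independent programs to cross-check). Your observation that the net charge of $f$ is determined entirely by the $(2d+2)$-character type string is the key point that makes the finite enumeration sound.
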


Lemmas~\ref{lm-ver3}, \ref{lm-fac3} and \ref{lm-fac56} together with the absence of any of the reducible configurations in a minimal counterexample
exclude the existence of a minimal counterexample for Theorem~\ref{thm-main}; this finishes the proof of Theorem~\ref{thm-main}.

\section*{Acknowledgement}

The authors would like to thank Jakub Slia\v can for discussions related to the computer assisted proofs contained in this paper.

\end{document}